\newtheorem{corollary}{Corollary}[section]
\newtheorem{definition}[corollary]{Definition}
\newtheorem{lemma}[corollary]{Lemma}
\newtheorem{proposition}[corollary]{Proposition}
\newtheorem{remark}[corollary]{Remark}
\newtheorem{theorem}[corollary]{Theorem}
\numberwithin{equation}{section}
\renewcommand{\leq}{\leqslant}
\renewcommand{\geq}{\geqslant}
\def\section{\@startsection {section}{1}{\z@}{-3.5ex plus -1ex minus
    -.2ex}{2.3ex plus .2ex}{\large\bf}}
\begin{document}

%%%%%%%%%+%%%%%%%%%+%%%%%%%%%+%%%%%%%%%+%%%%%%%%%+%%%%%%%%%+%%%%%%%%%+
\title{\bf LQR control for a system\\ describing the interaction between a floating solid and the surrounding fluid  }
\author[1]{Marius Tucsnak}
\affil[1]{Institut de Math\'{e}matiques de Bordeaux UMR 5251, Universit\'{e} de Bordeaux/Bordeaux INP/CNRS, 351 Cours de la Lib\'{e}ration, 33 405 TALENCE, France} %marius.tucsnak@u-bordeaux.fr}%,
\author[2]{Zhuo Xu}
\affil[2]{Institut de Math\'{e}matiques de Bordeaux UMR 5251, Universit\'{e} de Bordeaux/Bordeaux INP/CNRS, 351 Cours de la Lib\'{e}ration, 33 405 TALENCE, France}
%zhuo.xu@math.u-bordeaux.fr}
\date{Dedicated to professor for H\'el\`ene Frankowska for her 70th anniversary}
%%%%%%%%%%%%%%%%%%%%%%%%%%%%%%%%%%%%%%%%%%%%%%%%%%%

\maketitle

\begin{abstract}
This paper studies an infinite time horizon LQR optimal control problem for a system describing, within a linear approximation, the vertical oscillations of a floating solid, coupled to the motion of the free boundary fluid on which it floats. The fluid flow is described by a viscous version of the linearized Saint-Venant equations (shallow water regime). The major difficulty we are facing is that
the domain occupied by the fluid is unbounded so that the system is not exponentially stable. This fact firstly raises challenges in proving the wellposedness, requiring the combined use of analytic semigroup theory and of an interpolation technique. The main contribution of the paper is that we show that, in spite of the lack of exponential stabilizability, we can define a wellposed LQR problem for which a Riccati based approach to design feedback controls can be implemented.
\vskip 0.2in
\par

\textbf{Key words:}
floating solid; shallow water equations; analytic semigroup; strong stability; Riccati equation.

\textbf{AMS subject classification:} 93C20, 93B52, 35Q35, 49J21.
\end{abstract}

\tableofcontents

\section{Introduction}\label{sec_very_first}
\pagenumbering{arabic}\setcounter{page}{1}
\vspace{0.5cm}

In this paper we consider a coupled PDEs-ODEs system describing the motion of a floating solid in a free surface viscous fluid which fills the remaining part of the space. The considered system
can be seen as a simplified model for a point absorber, one on the most popular models of wave energy converters, see Bocchi \cite{p4}, Falnes and Johannes \cite{p9}, Haak, Maity, Takahashi and Tucsnak \cite{p13}. More generally, problems  describing the interaction of floating or immersed bodies with the fluid surrounding them have been studied, in particular, in John \cite{p14}, \cite{p15}, Lannes \cite{p17}, Maity, Mart\'{i}n, Takahashi and Tucsnak \cite{p19}, Maity and Tucsnak \cite{p20}.

In the present work, the fluid flow is supposed to be viscous and to obey the shallow water regime. The corresponding system (without inputs) has been considered in \cite{p19} in the case when the fluid-solid system is confined in a bounded container. The novelty we bring in this work is, besides introducing an input function, which we consider the situation in which the fluid is not bounded (of obvious interest when we aim at describing a point absorber situated in the ocean) and that we discuss an optimal control problem, with an input acting on the solid. One of the main difficulties which have to be tackled when solving this problem is that the semigroup corresponding to the system with no inputs is not exponentially stable. This type of control is of interest for wave energy converters in order to synchronize the oscillations of the rigid floating object and of the incoming waves, see Korde and Ringwood \cite{korde2016hydrodynamic}.

More precisely, we are concerned with an infinite horizon LQR optimal control problem for the system obtained by linearizing the equations which describe the interaction of a floating solid with the fluid on which it floats. This fluid is assumed to be infinite in the horizontal direction. This fact contrasts with the situation described in the previous paper \cite{p19}, where the fluid was supposed to be in a bounded container. The extension of this model to an unbounded fluid was first considered in Vergara-Hermosilla, Matignon and Tucsnak \cite{p11}, where input-output stability questions have been investigated.

To be more specific, we suppose that the floating solid has vertical walls and it undergoes only vertical motions, see Figure \ref{The body floating in an unbounded fluid}. In this case the projection of the solid on the fluid bottom (supposed to be horizontal) does not depend on time. We denote by $\mathcal{I}=[-a,a]$, with $a>0$, the projection on the fluid bottom of the solid domain and we set $\mathcal{E}=\mathbb{R} \verb|\| [-a,a]$. We designate by $\mu>0$ the viscosity coefficient of the fluid. The solid is supposed to have unit mass and it is constrained to move only in the vertical direction. We denote by $q(t,x)$ the flux at instant $t$ through the section $x$, by $h(t,x)$  the height of the free surface of the fluid at the point of abscissa $x$ at instant $t$, whereas $H(t)$ stands for  the distance from the bottom of rigid body to the sea bottom at instant $t$. Moreover, $p(t,x)$, designs the pressure under the rigid body (which can be seen as a Lagrange multiplier, see \cite[Section 2]{p19}. The input signal $u$ designs the variation with respect to time of the vertical force exerted by the controller on the rigid body. With the above notation, the considered system can be written, see \cite{p19}, \cite{p11}, in the form:
\begin{align}
	&\frac{\partial h}{\partial t} +\frac{\partial q}{\partial x}=0 &(& t>0, \;x\in \mathbb{R}),\label{A1}\\
	&\label{conversation_of_monmentun}\frac{\partial q}{\partial t}+\frac{\partial h}{\partial x}-\mu \frac{\partial^2 q}{\partial x^2}=0 &(& t>0, \;x\in \mathcal{E}),\\
	&\label{AHh}H(t)=h(t,x) &(& t>0,\; x\in \mathcal{I}),\\
	&\label{p=0}p(t,x)=0 &(&t>0,\;x\in\mathcal{E}),\\
	&\label{jump_condition_1} h\left(t,-a^-\right)-\mu \frac{\partial q}{\partial x}(t,-a^-)=p\left(t,-a^+\right)+H(t)- \mu \frac{\partial q}{\partial x}\left(t,-a^+\right) &(&t >0),
\end{align}
\begin{align}
	&\label{jump_condition_2}h\left(t,a^+\right)-\mu \frac{\partial q}{\partial x}\left(t,a^+\right)=p\left(t,a^-\right)+H(t)- \mu \frac{\partial q}{\partial x}(t,a^-) &(&t >0),\\
	&\label{newton_second_law}\ddot{H}(t)=\int_{-a}^{a} p(t,x) {\rm d}x+ u(t) &(&t>0),\\
	&\label{theconnectionofHp}\frac{\partial q}{\partial t}+\frac{\partial p}{\partial x}=0 &(& t>0,\; x\in \mathcal{I}),\\
	&q(t,-a^-)=q\left(t,-a^+\right)=q(t,-a),\quad  q\left(t,a^+\right)=q(t,a^-)=q(t,a) &(&t>0),\\
	&\label{A2}\lim_{\vert x\vert \rightarrow \infty}q(t,x)=0  &(&t>0),\\
& H(0)=H_0,\qquad \dot{H}(0)=G_0,
\\ &h(0,x)=h_0(x) &(&x\in \mathcal{E}),\\
&q(0,x)=q_0(x)  &(&x\in \mathbb{R}),
\label{A_Last}
\end{align}
where
\begin{equation}
	f\left(a^+\right)=\lim_{ x \rightarrow a \atop
		x>a}f(x),\qquad f(a^-)=\lim_{ x \rightarrow a \atop
		x<a}f(x).\notag
\end{equation}
Our first main result concerns the well-posedness of above system.

\begin{theorem}\label{th_exist_1}
Let $T>0$.	Then for every $\left[\begin{gathered}
	H_0\
    G_0\
	h_0\
	q_0
\end{gathered}\right]^\top \in \mathbb{R}^2\times H^1(\mathcal{E})\times H^1(\mathbb{R}) $ satisfying the compatibility condition	\begin{equation}\label{compatibility_condition}
	q_0(x)=-G_0x+\frac{q_0(a)+q_0(-a)}{2}\qquad\quad (x\in \mathcal{I}),
\end{equation}
 and every $u\in L^2(0,\infty)$, there exists a unique solution $\left[\begin{gathered}
 	H\
 	h\
 	q\
 	p
 \end{gathered}\right]^\top $ of \eqref{A1}-\eqref{A_Last} satisfying
	\begin{align}
	\label{the_regularity}	&H\in H^2(0,T),\quad h\in H^1\left((0,T);\; H^1(\mathcal{E})\right), \quad p\in L^2\left((0,T);\;\mathcal{P}_2(\mathcal{I})\right) ,\\
	\label{the_regularity2}	& q\in H^1\left((0,T);\; L^2(\mathbb{R})\right)\cap C\left([0,T];\;H^1(\mathbb{R})\right),\quad q\in H^1\left((0,T);\; \mathcal{P}_1(\mathcal{I})\right),\\
	\label{the_regularity3}	&q\in L^2\left((0,T);\;H^2(\mathcal{E})\right).
	\end{align}
	where $\mathcal{P}_1(\mathcal{I})$ and $\mathcal{P}_2(\mathcal{I})$  stand for  the spaces formed by first and second degree polynomials on $\mathcal{I}$, respectively. Moreover, this solution depends continuously on the control function and on the initial data.
\end{theorem}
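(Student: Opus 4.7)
My plan is to recast the problem as an abstract Cauchy problem $\dot z = A z + B u$ on a Hilbert space $X$, show that $A$ generates an analytic semigroup, and then invoke $L^{2}$ maximal regularity to produce a solution whose regularity can be read off from the domain of $A$ and from a suitable interpolation space containing the initial data.

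The first step is an algebraic reduction eliminating the Lagrange multiplier $p$ and the interior values of $h$ and $q$. Combining \eqref{AHh} with \eqref{A1} gives $\partial_x q(t,x) = -\dot H(t)$ on $\mathcal{I}$, so $q(t,\cdot)|_{\mathcal{I}}$ is affine in $x$ and is determined by $\dot H$ together with the continuity values $q(t,\pm a)$; this is precisely what the compatibility condition \eqref{compatibility_condition} encodes at $t=0$. Injecting this into \eqref{theconnectionofHp} forces $p(t,\cdot)|_{\mathcal{I}}$ to be a polynomial of degree at most two in $x$, while the two jump relations \eqref{jump_condition_1}--\eqref{jump_condition_2} together with \eqref{p=0} fix its coefficients in terms of $H$, of the traces $h(t,\pm a^{\pm})$ and of the viscous stresses $\partial_x q(t,\pm a^{\pm})$. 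Substituting the resulting expression of $\int_{-a}^{a} p\,{\rm d}x$ into Newton's law \eqref{newton_second_law} yields a closed evolution for the reduced state $z = (H,G,h_e,q_e)$ with $G=\dot H$, $h_e=h|_{\mathcal{E}}$, $q_e=q|_{\mathcal{E}}$; the other unknowns will be recovered a posteriori by the same explicit formulae.

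I would then set $A$ on the energy space $X = \mathbb{R}^{2} \times L^{2}(\mathcal{E}) \times L^{2}(\mathcal{E})$ with a domain encoding the transmission conditions at $\pm a$ inherited from \eqref{jump_condition_1}--\eqref{jump_condition_2}, and prove sectoriality by solving the resolvent equation $(\lambda I - A)z = f$. The reduction of Step 1 turns this resolvent problem into a coercive elliptic boundary value problem on $\mathcal{E}$ for $q_e$, of Stokes-like type, coupled with a finite-dimensional block for $(H,G)$; Lax--Milgram on the associated constrained space, combined with the coercivity provided by the viscous term $-\mu \partial_{xx}^{2} q$, yields the sectorial estimate $\|(\lambda I-A)^{-1}\|_{\mathcal{L}(X)} \lesssim 1/|\lambda|$ on a right half-plane. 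Note that only sectoriality, not exponential stability, is needed here, so the fact that $\mathcal{E}$ is unbounded is not an obstruction, even though it does preclude the compactness arguments used in the bounded-container setting of \cite{p19}. Once $A$ is sectorial, the standard $L^{2}$ maximal regularity theorem yields, for every $u \in L^{2}(0,\infty)$ and every $z_{0}$ in the real interpolation space $X_{1/2}:=(X,D(A))_{1/2,2}$, a unique $z \in L^{2}(0,T;D(A)) \cap H^{1}(0,T;X) \hookrightarrow C([0,T];X_{1/2})$, together with the corresponding continuous dependence estimate.

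The last and most delicate step, which is where the interpolation technique advertised in the abstract enters, is the identification of $X_{1/2}$ with the subspace of $\mathbb{R}^{2} \times H^{1}(\mathcal{E}) \times H^{1}(\mathbb{R})$ singled out by \eqref{compatibility_condition}. I expect this to be the main obstacle: $D(A)$ involves trace and jump conditions of order up to one at $\pm a$, and one must verify that interpolation at level $1/2$ retains only the continuity-type compatibility coming from the affine representation on $\mathcal{I}$ while dropping the higher-order boundary constraints, so that general $H^{1}$ data satisfying \eqref{compatibility_condition} are admissible. Once this identification is secured, the stated regularity \eqref{the_regularity}--\eqref{the_regularity3} is a direct transcription of $z \in L^{2}(D(A)) \cap H^{1}(X) \cap C(X_{1/2})$ in terms of the physical variables, and the variables $p$, $h|_{\mathcal{I}}$, $q|_{\mathcal{I}}$ are reconstructed through the explicit formulae derived in the reduction step, completing the proof.
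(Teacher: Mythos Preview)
Your overall strategy—reduce to an abstract Cauchy problem, prove analytic semigroup generation, then invoke $L^2$ maximal regularity together with an interpolation-space identification for the initial data—is exactly the paper's route. The gaps are in the specific implementation.

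First, your state space $X = \mathbb{R}^2 \times L^2(\mathcal{E}) \times L^2(\mathcal{E})$ and state $(H,G,h_e,q_e)$ differ from the paper's. The paper takes $\mathcal{X} = \mathbb{C}\times H^1(\mathcal{E})\times L^2(\mathcal{E})\times\mathbb{C}^2$ with state $(H,h,q,q_-,q_+)$, replacing the velocity $G$ by the pair of traces $q_\pm=q(\pm a)$ (so that $\dot H=-(q_+-q_-)/(2a)$) and carrying $h$ in $H^1$ rather than $L^2$; see \eqref{defineX}--\eqref{addes_label}. The $H^1$ slot for $h$ is what makes the target regularity $h\in H^1((0,T);H^1(\mathcal{E}))$ an immediate consequence of $z\in H^1((0,T);\mathcal{X})$, and it is also needed for the traces $h(-a^-)$, $h(a^+)$ appearing in \eqref{addes_label} to be bounded functionals on the state space.

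Second, and this is where the real gap lies, you have the difficulty allocation inverted. The paper disposes of the identification $(\mathcal{X},\mathcal{D}(\mathcal{A}))_{1/2,2}=\mathcal{W}$ in one line as ``not difficult to check'', while essentially all of the analytic work (Theorem~\ref{theo 2}, Lemmas~\ref{le1}--\ref{le4} and \ref{le5}--\ref{le6}, Proposition~\ref{rem1}, Theorem~\ref{analyticsemigroup}) goes into the sectorial resolvent estimate. That proof is not a Lax--Milgram argument: one solves explicitly the reduced ODE $-q''+\omega_\lambda^2 q=\varphi_\lambda$ on each half-line via Green operators $D_\pm(\omega_\lambda)$, $R_\pm(\omega_\lambda)$, reduces the coupling at $x=\pm a$ to the inversion of an explicit $2\times2$ matrix $\mathcal{M}_\lambda$, and then tracks the $\lambda$-dependence of every piece on the sector $\Sigma_\theta$. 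Your one-line ``coercivity from the viscous term'' does not deliver the sectorial bound: after eliminating $h$, the real part of the associated form in $q$ is $|q'|^2-\tau^2|q|^2$ along $\lambda=i\tau$, which is not coercive for large $\tau$. The explicit resolvent analysis is not an alternative to the hard step here; it \emph{is} the hard step.
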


\begin{figure}
	%\begin{minipage}[t]{1.0\linewidth}
		%\centering
		\includegraphics[width=15cm, height=8cm]{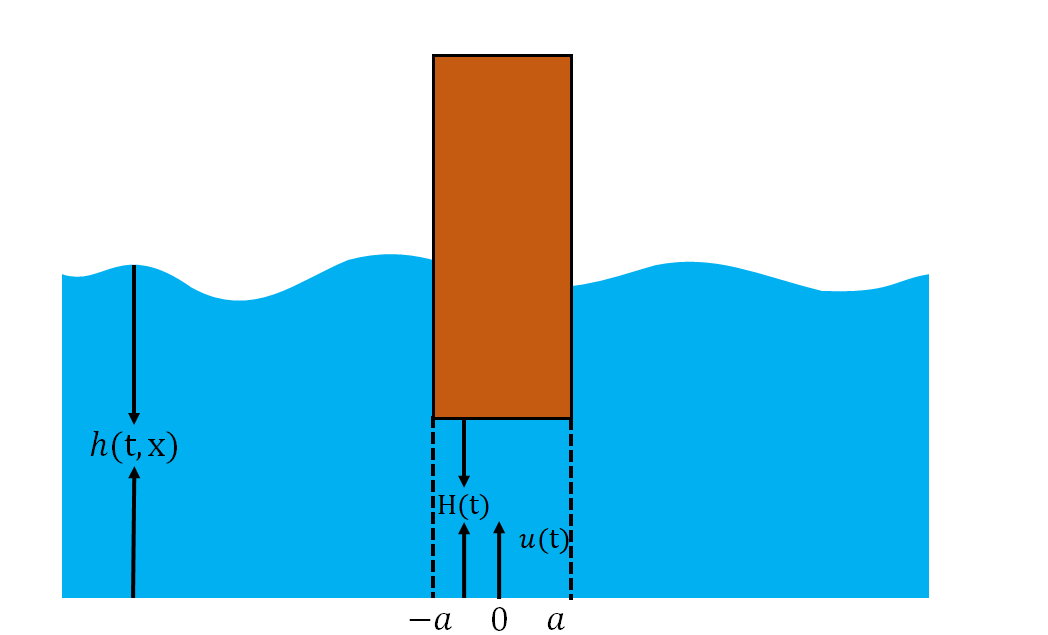}
		\caption{The body floating in an unbounded fluid}\label{The body floating in an unbounded fluid}
	%\end{minipage}
\end{figure}

We next consider the optimal control problem associated to the system \eqref{A1}-\eqref{A_Last} which consists in minimizing the cost functional $J: L^2(0,\infty)\to \mathbb{R}\cup \{+\infty\}$ defined by
\begin{gather}\label{defineJu}
	J(u) = \int_{0}^{\infty} \left(\left\vert u(t)\right\vert^2 + \left\vert\dot{H}(t)\right\vert^2\right)\, {\rm d}t \qquad\qquad\left(u\in L^2(0,\infty)\right).
\end{gather}

At this stage, our second main result can be stated as follows:

\begin{theorem}\label{thA1}
	For every initial data satisfying the assumptions in Theorem \ref{th_exist_1}, there exists a unique $\bar u\in L^2(0,\infty)$ such that $J(\bar u)<J(u)$ for every $u\in L^2(0,\infty)$, $u\neq \bar u$. Moreover, $\bar u$ can be written in feedback form.
\end{theorem}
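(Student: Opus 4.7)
The plan is to recast the system \eqref{A1}-\eqref{A_Last} in the abstract form $\dot z = A z + B u$ on a Hilbert state space $X$ whose elements encode $(H,\dot H,h,q)$ subject to the structural constraints \eqref{AHh}, \eqref{theconnectionofHp} and the compatibility condition \eqref{compatibility_condition}. By Theorem \ref{th_exist_1}, $A$ generates an analytic $C_0$-semigroup on $X$ and the control operator $B\in\mathcal L(\mathbb R,X)$ is bounded, since $u$ enters only through the scalar ODE \eqref{newton_second_law}. Setting $Cz=\dot H$, the cost becomes $J(u)=\|u\|_{L^2(0,\infty)}^2+\|Cz\|_{L^2(0,\infty)}^2$, placing us in the standard LQR framework with bounded input and output.

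First I would establish \emph{optimizability}, that is, the existence of at least one $u\in L^2(0,\infty)$ with $J(u)<\infty$; the natural candidate is $u\equiv 0$. Using a Hamiltonian of the form $E(t)=\tfrac12|\dot H(t)|^2+\tfrac12\int_{\mathbb R}|q(t,x)|^2\,{\rm d}x+\tfrac12\int_{\mathcal E}|h(t,x)|^2\,{\rm d}x$ (appropriately augmented to take \eqref{AHh} into account), a viscous energy identity of the type $\dot E=-\mu\int_{\mathcal E}|\partial_x q|^2\,{\rm d}x$ would give $\partial_x q\in L^2(0,\infty;L^2(\mathcal E))$. Combined with \eqref{compatibility_condition} and the jump relations \eqref{jump_condition_1}--\eqref{jump_condition_2}, this should propagate enough integrability through \eqref{newton_second_law} to force $\dot H\in L^2(0,\infty)$ for the free evolution, so that $J(0)<\infty$. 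Once optimizability is in hand, existence and uniqueness of the minimizer $\bar u$ follow from convex analysis: $J$ is strictly convex, coercive (from the $\|u\|^2$ term), and weakly lower semicontinuous on $L^2(0,\infty)$ (because by Theorem \ref{th_exist_1} the output $\dot H$ depends continuously on $u$), and $\inf J$ is finite.

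For the feedback representation I would pass through the algebraic Riccati equation. Setting $V(z_0)=\inf_u J(u;z_0)$, the goal is to show that $V(z_0)=\langle\Pi z_0,z_0\rangle_X$ for a unique nonnegative self-adjoint $\Pi\in\mathcal L(X)$ solving
\begin{equation*}
	A^*\Pi+\Pi A-\Pi B B^*\Pi+C^*C=0
\end{equation*}
on $\operatorname{dom}(A)$, and then to extract the feedback $\bar u(t)=-B^*\Pi\bar z(t)$ via dynamic programming.

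The main obstacle is the lack of exponential stability of the uncontrolled semigroup, which is explicitly emphasized in the abstract: the standard LQR theorems (Curtain--Zwart, Lasiecka--Triggiani) conclude that $A-BB^*\Pi$ generates an exponentially stable semigroup, and this cannot hold here because energy radiates to infinity along the unbounded channel. I would therefore rely on a version of the Riccati theory that requires only optimizability of $(A,B)$ together with detectability of $(A,C)$ and accepts a merely strongly stable closed loop; verifying such a detectability property is where I expect the heaviest technical work. It should reduce to a unique-continuation statement of the form: any eigenfunction of $A^*$ that lies in $\ker C$ must vanish identically, which in turn exploits the viscous diffusion of $q$ on $\mathcal E$ together with the decay condition \eqref{A2} at infinity.
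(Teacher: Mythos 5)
Your reduction to an abstract system with bounded $B$, bounded observation $Cz=\dot H$, and the recognition that the obstruction is non-exponential stability are all in line with the paper. The existence--uniqueness half of your plan (optimizability plus strict convexity, coercivity and weak lower semicontinuity of $J$) is a legitimate alternative to the paper, which never argues by convex analysis; but even there your optimizability step is shaky as written: with $u\equiv 0$ the energy identity of Proposition \ref{leA1} only gives $\partial_x q\in L^2(0,\infty;L^2(\mathbb{R}))$ directly, and the way this controls $\dot H$ is not some propagation ``through \eqref{jump_condition_1}--\eqref{jump_condition_2} and \eqref{newton_second_law}'' but simply the fact that on $\mathcal{I}$ one has $\partial_x q=-\dot H$, so the dissipation term itself contains $2a\mu|\dot H|^2$; the paper instead closes the loop with $u=-\alpha\dot H$, which makes $J(\tilde u_\alpha)<\infty$ immediate from \eqref{EEst}. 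Either way this part is repairable.

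The genuine gap is in the feedback/Riccati half, which is the actual content of the ``Moreover'' in Theorem \ref{thA1}. You propose to invoke a Riccati theory based on optimizability of $(A,B)$ plus \emph{detectability} of $(A,C)$, verified through a Hautus/unique-continuation test on eigenfunctions of $A^*$. This cannot work here: if $(A,C)$ were exponentially detectable, then (by the standard LQR results you cite) optimizability would force the optimal closed-loop semigroup generated by $A-BB^*\Pi$ to be exponentially stable, i.e. $(A,B)$ would be exponentially stabilizable with the bounded feedback $G=-B^*\Pi$, contradicting Proposition \ref{the_point_spectrum_AF}, which shows $0\in\sigma(A+BG)$ for \emph{every} bounded $G$. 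A weakened, Hautus-type condition on $\ker C$ does not by itself support the construction of the Riccati operator in the absence of exponential stabilizability. What the paper needs, and what your outline omits, is the explicit strongly stabilizing feedback suggested by the energy identity, $u=-\dot H$, i.e. $Fz=\frac{q_+-q_-}{2a}$ as in \eqref{DEF}; one then proves that $A+BF$ generates a bounded analytic semigroup whose spectrum meets $i\mathbb{R}$ only at $0$, which is not an eigenvalue (Propositions \ref{thD1} and \ref{the_point_spectrum_AF}), so the Arendt--Batty theorem gives strong stability (Proposition \ref{strongly_stable}), while the energy estimate gives output stability of the extended output $(C,F)$ since both outputs are $\pm\dot H$. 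These are precisely the hypotheses of the Curtain--Oostveen result (Proposition \ref{mainusedtheorem}), which then delivers existence, uniqueness and the feedback law $\bar u=-B^*P_0z$ in one stroke (Theorem \ref{thA3}). Without such a concrete $F$, or an equivalently verified substitute hypothesis, your Riccati step has no foundation.
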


A more detailed version of the above result, including details on the feedback design, will be given, after introducing the necessary notation, in Theorem \ref{thA3}.

As for wellposedness, the main difficulty in the proof of Theorem \ref{thA1} is that system \eqref{A1}-\eqref{A_Last} is not exponentially stable and even not exponentially stabilizable. This fact does not allow the application of the standard theory of infinite horizon LQR problems. Nevertheless, we show below that our system is strongly stable and output stabilizable, which entitles us to use the theory developed by Curtain and Oostveen \cite{curtain1999necessary} for this type of systems.

The remaining part of this work is organized as follows. In Section \ref{backgroundofLqQ} we provide some necessary background on LQR problems for infinite dimensional systems. In Sections \ref{section2} and \ref{sec_well_new} we give the proof of Theorem \ref{th_exist_1}. Finally, Section  \ref{section4} is devoted to the proof of Theorem \ref{thA1}.

%%%%%%%%%%%%%%%%%%%%%%%%%%%%%%%%%%%%%%%%%%%%%%%%%%%%%%%%%%%%%%%%%%%%%%%%%%%%%%%%%%%%%%%%%%%%%%%%%%%%%%%%%%%%%%%%%%%%%%%%%%%%%%%%%%%%%%%%%%%%%
\section{Some background on LQR problems}\label{backgroundofLqQ}

As shown below in Section \ref{section4}, equations \eqref{A1}-\eqref{A_Last} do not define an exponentially stabilizable system, so that the standard
way to prove that the system is optimizable and to develop a Riccati type theory (see, for instance, Curtain and Zwart
\cite[Section 6.2]{curtain1995introduction}) does not apply. We will instead establish directly the optimizability by ad-hoc arguments and use the LQR theory for strongly stable systems developed in Curtain and Oostveen \cite{curtain1999necessary}, see also Bensoussan, Prato, Delfour and Mitter \cite{p2} or Oostveen, Curtain and Ito \cite{p21}.
For the reader's convenience, in this section we provide, following the references above, the necessary background on infinite horizon LQR problems, with focus on systems which are strongly stable.

 Let $X$, $U$ and $Y$ be Hilbert spaces. We consider the operators $A: \mathcal{D}(A)\to X$,  which generates a $C^0$ semigroup $\mathbb{T}$ on $X$, $B\in \mathcal{L}(U,X)$ and $C\in \mathcal{L}(X,Y)$. We consider the linear system $\Sigma(A,B,C)$, of state space $X$, input space $U$ and output space $Y$, described by the equations
 \begin{align}
 	&\label{semigroupeq1}\dot{z}(t)=Az(t)+Bu(t) &(&t>0),\\
 	&\label{semigroupeq2}z(0)=z_0,\\
 	&\label{semigroupeq3}y(t)=Cz(t) &(& t>0),
 	 \end{align}
 with $z_0$ arbitrarily chosen in $X$. The infinite horizon LQR problem for \eqref{semigroupeq1}-\eqref{semigroupeq3},
 consists in minimizing the cost functional
\begin{equation}\label{costfunction}
	J(u)=\int_{0}^{\infty}\left(\left\Vert Cz(t)\right\Vert^2_Y+\left\Vert u(t)\right\Vert^2_U\right)\, {\rm d}t
\qquad\qquad (u\in L^2\left((0,\infty);\; U)\right),
\end{equation}
 over $L^2((0,\infty);\;U)$. The above problem makes sense for every $z_0\in X$ if and only if the system is {\em optimizable}, i.e., if
 for every $z_0\in X$ there exists $\overline u\in L^2\left((0,\infty);\;U\right)$ such that $J(\overline u)<\infty$.
 According to a classical result, see for instance, \cite[Theorem 6.2.4]{curtain1995introduction}, under the appropriate assumptions (including optimizability) the considered LQR problem admits a unique solution $u^*\in L^2((0,\infty);\;U)$ and there exists a self-adjoint, nonnegative operator $\Pi\in \mathcal{L}(X)$ such that \ \ $J(u^*)=\left\langle z_0,\Pi z_0\right\rangle_X$. Under supplementary assumptions (which imply, in particular, that $(A,B)$ is exponentially stabilizable), the operator $\Pi$ can be derived from the solution of an algebraic Riccati equation (see \cite[Theorem 6.2.7]{curtain1995introduction}). As already mentioned, exponential stabilizability does not hold for the system we consider in the forthcoming sections. Therefore, in order to apply a Riccati type theory, we use the approach developed in \cite{p2},
  \cite{curtain1999necessary} and  \cite{p21}, for systems which are only strongly stable. To state the result, we need the following definition.

 \begin{definition}
 For every $t>0$, the output map $\Psi_t:$ ${X}\to L^2((0,\infty); Y)$ is defined by
 \begin{equation*}
 			(\Psi_t z_0)(\sigma)=\begin{cases}
 C\mathbb{T}(\sigma)z_0 & \qquad\qquad(\sigma\in (0,t],\ z_0\in X),\\
 0                       & \qquad\qquad(\sigma > t,\ z_0\in X).	
 \end{cases}
 \end{equation*}
 The system \eqref{semigroupeq1}-\eqref{semigroupeq3} is called output stable if there exists $\Psi\in \mathcal{L}\left({X},\;L^2((0,\,\infty); \;Y)\right)$ such that
 $$
 \lim_{t\to \infty}\|\Psi_t-\Psi\|_{\mathcal{L}\left(X,\;L^2((0,\,\infty); \;Y)\right)}=0.
 $$
 	\end{definition}

 \begin{remark}\label{book_remark}
 According to the terminology introduced in Tucsnak and Weiss {\rm \cite[Section 4.6]{tucsnak2009observation}}, if operators $A$ and $C$ are as in the above definition, we say that $C$ is an infinite-time admissible observation operator for $\mathbb{T}$ and we design $\Psi$ as the extended output map.
 \end{remark}

An essential tool in proving our second main result is the proposition below, for which we refer again to \cite{curtain1999necessary}.

 \begin{proposition}\label{mainusedtheorem}
 	Suppose that there exists an operator $F\in \mathcal{L}(X, U)$ such that the operator $A+BF$ generates a strongly stable semigroup $\mathbb{T}^F$ on $X$ and with the system $\Sigma\left(A+BF,\,0,\, \begin{bmatrix} C\\F\end{bmatrix}\right)$ (with state space $X$, input space $U$ and  output space $Y\times U$) output stable. Then there exist a unique optimal control $\bar{u}\in L^2((0,\infty);\;U)$ minimizing \eqref{costfunction}.  Moreover $\bar{u}$ is given by $\bar{u}(t)=-B^*P_0 z(t)$, where $P_0\in \mathcal{L}(X)$ is the minimal self-adjoint nonnegative solution to the algebraic Riccati equation
 	\begin{equation}
 		{A}^*P z+P{A}z-P{B}{B}^*Pz+{C}^*{C} z = 0 \qquad\qquad (z\in \mathcal{D}(A)).\notag
 	\end{equation}
 Finally, the corresponding minimal cost functional is given by
 \begin{equation}
 		J(\bar{u},\,z_0)=\left<P_0 z_0,\, z_0\right>_{X},\notag
 \end{equation}
 where $\left<\cdot,\, \cdot\right>_{X}$ is the inner product in the Hilbert space ${X}$.
 \end{proposition}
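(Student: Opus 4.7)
The plan is to follow the classical LQR blueprint — establish optimizability, extract the value function, derive the algebraic Riccati equation, and then synthesize the optimal feedback — with the stabilizing feedback $F$ playing the role usually assigned to exponential stabilizability. First, I would verify optimizability: for any $z_0\in X$, the open-loop realization $u_F(t)=F\mathbb{T}^F(t)z_0$ of the closed loop yields, thanks to the output stability of $\Sigma\left(A+BF,\,0,\,\begin{bmatrix}C\\ F\end{bmatrix}\right)$, a constant $K>0$ such that
\begin{equation*}
J(u_F) = \int_0^\infty \bigl(\|C\mathbb{T}^F(t)z_0\|_Y^2 + \|F\mathbb{T}^F(t)z_0\|_U^2\bigr)\,{\rm d}t \leq K\|z_0\|_X^2.
\end{equation*}
Strict convexity and coercivity of $J$ on $L^2((0,\infty);U)$ then produce, via the direct method, a unique minimizer $\bar u$ for every $z_0$, and polarization defines a bounded, self-adjoint, nonnegative operator $P_0\in\mathcal{L}(X)$ by $J(\bar u,z_0)=\langle P_0 z_0,z_0\rangle_X$.

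Next, I would derive the Riccati equation via Bellman's dynamic programming principle: for every $t>0$,
\begin{equation*}
\langle P_0 z_0,z_0\rangle_X = \inf_u\Bigl\{\int_0^t \bigl(\|Cz(s)\|_Y^2+\|u(s)\|_U^2\bigr)\,{\rm d}s + \langle P_0 z(t),z(t)\rangle_X\Bigr\}.
\end{equation*}
Restricting to $z_0\in\mathcal{D}(A)$ and smooth controls, differentiation at $t=0^+$ delivers the ARE in pointwise form. A completion-of-squares identity,
\begin{equation*}
\|u(t)\|_U^2 + \|Cz(t)\|_Y^2 = \|u(t)+B^*P_0 z(t)\|_U^2 - \frac{{\rm d}}{{\rm d}t}\langle P_0 z(t),z(t)\rangle_X,
\end{equation*}
obtained by expanding and invoking the ARE, then supplies both the feedback formula $\bar u(t)=-B^*P_0 z(t)$ and, after integrating in time, the cost identity $J(\bar u,z_0)=\langle P_0 z_0,z_0\rangle_X$. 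Minimality of $P_0$ among self-adjoint nonnegative solutions follows by comparison: for any such solution $Q$, the dissipation identity provided by the ARE, combined with strong stability of the optimal closed loop, yields $\langle Qz_0,z_0\rangle_X \geq J(\bar u,z_0)$.

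The main obstacle — and the reason the standard theory of \cite{curtain1995introduction} does not apply directly — is the absence of exponential stabilizability: this prevents one from exhibiting a bounded solution of the ARE via the usual contraction argument, and it also leaves open whether the optimal closed loop $A-BB^*P_0$ is strongly stable. I would address both issues by a shifted-system regularization: for each $\varepsilon>0$ the pair $(A-\varepsilon I,B)$ is exponentially stabilizable (through the same $F$, for small $\varepsilon$), so the classical Riccati theory produces a self-adjoint nonnegative $P_\varepsilon$ solving the shifted ARE. The uniform bound $\langle P_\varepsilon z_0,z_0\rangle_X\leq K\|z_0\|_X^2$ inherited from the optimizability estimate delivers, along a subsequence, a weak-$*$ limit $P_0\in\mathcal{L}(X)$; the output stability hypothesis is then exactly what is needed to pass to the limit both in the ARE and in the quadratic cost as $\varepsilon\to 0^+$, and this passage to the limit is the technically delicate part.
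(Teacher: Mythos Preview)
The paper does not prove this proposition. It is stated as a background result and attributed to Curtain and Oostveen \cite{curtain1999necessary}; see also the references to \cite{p2} and \cite{p21} in the surrounding text and in Remark~\ref{lab_rem}. So there is no ``paper's own proof'' to compare against: the authors simply quote the result and apply it in Section~\ref{section4}.

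Your sketch is a reasonable outline of the argument one finds in those references, and the overall architecture (optimizability from $F$, value operator $P_0$, dynamic programming, completion of squares, minimality) is correct. One comment on the final step: the $\varepsilon$-shift regularization you describe is closer to the approach in \cite{p2}, whereas \cite{curtain1999necessary} and \cite{p21} work more directly with the strongly stable closed loop and the extended output map, avoiding the exponential-stabilizability detour altogether. Either route works, but since the paper cites \cite{curtain1999necessary} as the primary source, a direct argument based on output stability would be more in keeping with the intended reference.
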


 \begin{remark}\label{lab_rem}
The sense which $P$ satisfies the Riccati equation above is described in {\rm \cite[Part IV 3.1, p.398 and Part V, p.481]{p2}}, where it is shown, in particular, that $Pz\in \mathcal{D}(A^*)$ for every $z\in \mathcal{D}(A)$.
\end{remark}

%%%%%%%%%%%%%%%%%%%%%%%%%%%%%%%%%%%%%%%%%%%%%%%%%%%%%%%%%%%%%%%%%%%%%%%%%%%%%%%%%%%%%%%%%%%%%%%%%%%%%%%%%%%%%%%%%%%%%%%%%%
\section{Operator form of the governing equations and semigroup generator}\label{section2}

In this section, we show that system \eqref{A1}-\eqref{A_Last} can be written in the standard form
\begin{align}
&\label{abstract_form}\dot z(t)=\mathcal{A}z(t)+\mathcal{B}u(t)\qquad (t>0),\\
&\label{abstract_form_2}z(0)=z_0,
\end{align}
where $\mathcal{A}$ is the generator of an analytic semigroup on a Hilbert space $\mathcal{X}$ and $\mathcal{B}\in \mathcal{L}(\mathcal{U},\mathcal{X})$,
where $\mathcal{U}$  is another Hilbert space.

To write system \eqref{A1}-\eqref{A_Last} in the form \eqref{abstract_form}-\eqref{abstract_form_2} we follow the procedure proposed in \cite{p19}, where the corresponding system, with the fluid confined in a bounded container, has been investigated.

We begin by setting
\begin{equation}\label{q_pm}
	q_-(t)=q(t,-a),\qquad q_+(t)=q(t,a) \qquad\qquad (t>0).
\end{equation}
  Next we remark that the methodology developed  in \cite{p19} to eliminate the pressure term $p$ can be applied with no major modification to our case. In this way we obtain the following system, equivalent to the original governing equations \eqref{A1}-\eqref{A_Last}:
\begin{align}
	&\label{A3}\dot{H}(t)=-\frac{q_+(t)-q_-(t)}{2a} &(& t>0),\\
	&\label{conversationofmass}\frac{\partial h}{\partial t}=-\frac{\partial q}{\partial x} &(&t>0,\; x\in \mathcal{E}),\\
	&\label{conversationofmonmutem}\frac{\partial q}{\partial t}=-\frac{\partial h}{\partial x}+\mu \frac{\partial^2 q}{\partial x^2} &(&t>0,\; x\in \mathcal{E}),\\
	&\label{boundarycondition}\left[
	\begin{gathered}
		\dot{q}_-(t)\\
		\dot{q}_+(t)
	\end{gathered}
	\right]=2aM\left[\begin{gathered}
		u(t)\\
		-u(t)
	\end{gathered}\right]\notag\\
	&+4a^2M\left[
	\begin{gathered}
		\mu\left(\frac{q_+(t)-q_-(t)}{2a}\right)+\left(h(t,-a^-)-H(t)\right)-\mu\frac{\partial q}{\partial x}(t,-a^-)\\
		-\mu\left(\frac{q_+(t)-q_-(t)}{2a}\right)-\left(h(t,a^+)-H(t)\right)+\mu\frac{\partial q}{\partial x}\left(t,a^+\right)
	\end{gathered}
	\right] &(& t>0),\\
	&\lim_{\vert x\vert \rightarrow \infty}q(t,x)=0 &(&t>0),\\
	&H(0)=H_0,\\
	&h(0,x)=h_0(x) &(&x\in \mathcal{E}),\\
	&\label{initial_date_of_q}q(0,x)=q_0(x) &(&x\in\mathcal{E}),\\
	&\label{A4} q_-(0)=q_{-,0},\qquad q_+(0)=q_{+,0},
\end{align}
where the matrix $M$ is defined by
\begin{align}
	M=\frac{1}{8a^3\left(1+\frac{2a^3}{3}\right)}\begin{bmatrix}
		1+\frac{8a^3}{3} && 1-\frac{4a^3}{3}\\
		1-\frac{4a^3}{3}  && 1+\frac{8a^3}{3}
	\end{bmatrix}.\label{AM}
\end{align}
\indent  At this stage  we introduce more notation, which will be used in all the remaining part of this paper. Firstly, we define the spaces
\begin{align}
	&\label{defineX}\mathcal{X} = \left\{ [H \,\;h\,\;q\,\;q_-\,\; q_+]^\top \in \mathbb{C} \times H^{1}(\mathcal{E})
	\times L^2(\mathcal{E})\times \mathbb{C} \times \mathbb{C}  \right\},\\
	&\label{defineZ}\mathcal{Z} = \left\{[H\,\;h\,\;q\,\;q_-\,\; q_+]^\top \in \mathbb{C}\times H^{1}(\mathcal{E}) \times H^{2}(\mathcal{E})\times\mathbb{C}\times \mathbb{C} \;\vert\; q(-a)=q_-,\, q(a)=q_+ \right\},\\
&\label{defineW}
	 	\mathcal{W}= \left\{[H\,\;h\,\;q\,\;q_-\,\; q_+]^\top \in \mathbb{C}\times H^{1}(\mathcal{E}) \times H^{1}(\mathcal{E})\times\mathbb{C}\times \mathbb{C} \;\vert \; q(-a)=q_-,\, q(a)=q_+ \right\},\\
	&\mathcal{U}=\mathbb{C}.\label{defineU}
\end{align}
Moreover, denoting by $z=[H\,\;h\,\;q\,\;q_-\,\; q_+]^\top$ a generic element of $\mathcal{X}$, we also introduce the operators $\mathcal{A}:$ $\mathcal{D}(\mathcal{A})\rightarrow {\mathcal{X}}$ and $\mathcal{B}\in \mathcal{L}(\mathcal{U}, {\mathcal{X}})$ defined by
\begin{equation}\label{defineDA}
	\mathcal{D}(\mathcal{A})=\mathcal{Z},
\end{equation}
\begin{align}
	\mathcal{A}z=\left[
	\begin{gathered}
		-\frac{q_+-q_-}{2a}\\
		-\frac{{\rm d}q}{{\rm d}x}\\
		-\frac{{\rm d}h}{{\rm d}x}+\mu\frac{{\rm d}^2q}{{\rm d}x^2}\\
		R_1z\\
		R_2z
	\end{gathered}\right], \qquad\qquad
	 \label{A12}\mathcal{B}u=\left[\begin{gathered}
		0\\
		0\\
		0\\
		\frac{au}{(1+\frac{2a^3}{3})}\\
		-\frac{au}{(1+\frac{2a^3}{3})}
	\end{gathered}
	\right]\qquad (z\in \mathcal{D}(\mathcal{A}),\ u\in \mathcal{U}),
\end{align}
where
\begin{align}\label{addes_label}
	\left[\begin{gathered}
		R_1z\\
		R_2z
	\end{gathered}\right]=4a^2M\left[\begin{gathered}
		\mu\left(\frac{q_+-q_-}{2a}\right)+	\left(h(-a^-)-H\right)-\mu\frac{{\rm d} q}{{\rm d} x}(-a^-)\\
		-\mu\left(\frac{q_+-q_-}{2a}\right)	-\left(h(a^+)-H\right)+\mu\frac{{\rm d} q}{{\rm d} x}\left(a^+\right)
	\end{gathered}\right]. \end{align}
It is easy to check that $\mathcal{B}\in \mathcal{L}(\mathcal{U},\mathcal{X})$.

With the above notation, following step by step the procedure in \cite{p19}, we have:

\begin{proposition}\label{echivalenta_mare}
Let $T>0$ and let $\left[\begin{gathered}
	H_0\ G_0\ h_0\ q_0
\end{gathered}\right]^\top$
satisfies the assumptions in Theorem \ref{th_exist_1}.
Let $\left[\begin{gathered}
 	H\
 	h\
 	q\
 	p
 \end{gathered}\right]^\top $ be a solution of  \eqref{A1}-\eqref{A_Last} satisfying
\eqref{the_regularity}-\eqref{the_regularity3}. Then $z = [H\,\;h\,\;q\,\;q_-\,\; q_+]^\top$, with $q_+$ and $q_-$ defined in \eqref{q_pm},
lies in
\begin{equation}\label{reg_W_bis}
C([0,T];\;\mathcal{W})\cap L^2((0,T);\;\mathcal{Z})\cap H^1((0,T);\;\mathcal{X}),
\end{equation}
and it satisfies
\begin{align}
	&\label{E1}\dot{z}(t)=\mathcal{A}z(t)+\mathcal{B}u(t) \qquad\qquad ( t>0) ,\\
	&\label{E3}z(0)=z_0.
\end{align}
Conversely, assume that $z$ satisfies \eqref{reg_W_bis}-\eqref{E3}. Then
there exists an extensions of $q$ (still denoted by $q$)
and $p$ such that $\left[\begin{gathered}
 	H\
 	h\
 	q\
 	p
 \end{gathered}\right]^\top $ is a solution of  \eqref{A1}-\eqref{A_Last} satisfying
\eqref{the_regularity}-\eqref{the_regularity3}.

\end{proposition}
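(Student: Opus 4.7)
The proposition asserts an equivalence between PDE--ODE solutions of \eqref{A1}--\eqref{A_Last} and solutions of the abstract evolution problem \eqref{E1}--\eqref{E3}, so the plan is to verify the two implications separately, following the pressure-elimination strategy of \cite{p19} adapted to the unbounded fluid setting. The governing observation is that, under the regularity \eqref{the_regularity}--\eqref{the_regularity3}, the restriction of $q$ to $\mathcal{I}$ is a first degree polynomial in $x$ and the pressure $p$ is a second degree one; this is what makes the $x$-dependence under the solid transparent and allows one to write the solid boundary data $\dot q_\pm$ as a linear combination of the traces of $h, q, \partial_x q$ on $\partial \mathcal{E}$ and of the input $u$.

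For the forward implication I would start with a solution $[H,h,q,p]^\top$ of \eqref{A1}--\eqref{A_Last} with the regularity \eqref{the_regularity}--\eqref{the_regularity3}, set $q_\pm(t) = q(t,\pm a)$, and use the Sobolev trace theorem on $H^1(\mathcal{E})$ together with the regularity of $q$ to place $z = [H,h,q,q_-,q_+]^\top$ in $C([0,T];\mathcal{W}) \cap L^2((0,T);\mathcal{Z}) \cap H^1((0,T);\mathcal{X})$. Equation \eqref{A3} then follows by differentiating \eqref{AHh} in $t$, applying \eqref{A1}, and integrating the resulting identity $\dot H = -\partial_x q$ across $\mathcal{I}$. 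Equations \eqref{conversationofmass}--\eqref{conversationofmonmutem} reduce to \eqref{A1}--\eqref{conversation_of_monmentun} restricted to $\mathcal{E}$. The delicate step is deriving \eqref{boundarycondition}: writing $p(t,x) = \alpha(t)+\beta(t)x+\gamma(t)x^2$ on $\mathcal{I}$, equations \eqref{jump_condition_1}--\eqref{jump_condition_2} determine $p(\pm a)$ in terms of $h(\pm a^\mp), H, \partial_x q(\pm a^\mp)$ and $(q_+-q_-)/(2a)$; relation \eqref{theconnectionofHp} combined with $q(t,x)$ being affine on $\mathcal{I}$ forces $\dot q_\pm = -\beta \mp 2\gamma a$; and finally \eqref{newton_second_law} yields a third linear equation linking $\alpha,\gamma,\ddot H$ and $u$. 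Solving this $3\times 3$ linear system for $(\dot q_-, \dot q_+)$ as affine functions of the boundary data and $u$ produces exactly the matrix $M$ of \eqref{AM}, establishing \eqref{boundarycondition}. The initial condition \eqref{E3} is then immediate.

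For the converse, given $z$ with the stated regularity satisfying \eqref{E1}--\eqref{E3}, I extend $q$ to $\mathcal{I}$ by the affine ansatz $q(t,x) = -\dot H(t) x + (q_+(t)+q_-(t))/2$; the first component of \eqref{E1} is exactly $\dot H = -(q_+-q_-)/(2a)$, which ensures continuity with the given $q_\pm$ at $x=\pm a$ and, using the initial-value compatibility \eqref{compatibility_condition}, is consistent with any prescribed $G_0 = \dot H(0)$. I then define $p$ on $\mathcal{I}$ as the unique quadratic polynomial whose coefficients $(\alpha,\beta,\gamma)$ are read off from the relations $\partial_x p = -\partial_t q$ (giving $\beta,\gamma$ in terms of $\dot q_\pm$) and from \eqref{jump_condition_1} or \eqref{jump_condition_2} (giving $\alpha$); the regularity statement $p \in L^2((0,T);\mathcal{P}_2(\mathcal{I}))$ follows from $\dot q_\pm, h(\pm a^\mp), H, \partial_x q(\pm a^\mp) \in L^2(0,T)$. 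The remaining check --- and this is the main obstacle --- is that the two equations of \eqref{boundarycondition}, which encode only two scalar identities, together with the first component of \eqref{E1} imply \emph{three} things in the original formulation, namely the two jump conditions and Newton's law \eqref{newton_second_law}. One verifies this by retracing the linear algebra used to derive $M$: the invertibility of the $2\times 2$ block combined with the scalar ODE for $\dot H$ decouples the three relations, so each of \eqref{jump_condition_1}, \eqref{jump_condition_2}, \eqref{newton_second_law} is recovered in turn. All the remaining equations \eqref{A1}--\eqref{A_Last}, and the continuous dependence statement, then follow immediately from the construction and from the regularity of $z$.
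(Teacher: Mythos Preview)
Your proposal is correct and follows precisely the pressure-elimination procedure from \cite{p19} that the paper invokes without supplying further details. Your sketch fills in exactly those details---the affine/quadratic ans\"atze for $q|_{\mathcal{I}}$ and $p$, and the linear algebra producing the matrix $M$---adapted to the unbounded exterior domain.
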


%\begin{remark}\label{rem_ext}
%It is easily checked that the operator $\mathcal{A}$ defined above admits a unique extension $\mathbb{A}$ to an operator in $\mathcal{L}(\mathcal{Z},\mathbb{X})$, where
%\begin{align}\label{lab_align}
%	&\mathbb{X}= \left\{ [H,\;h,\;q,\;q_-,\; q_+]^\top \in \mathbb{C} \times H^{1}(\mathcal{E})
%	\times L^{2}(\mathcal{E})\times \mathbb{C} \times \mathbb{C}  \right\}.
%\end{align}
%This extension is defined by the same formulas \eqref{A12} and \eqref{addes_label} as in the definition of $\mathcal{A}$.
%\end{remark}

 In this section and in the following one we prove that operator $\mathcal{A}$ generates an analytic semigroup on $\mathcal{X}$
and we give the first consequences of this result on the wellposedness of system \eqref{A3}-\eqref{A4}.
The remaining part of this section is devoted to the study of the spectrum and of the resolvents of the
 operator $\mathcal{A}$ defined above.

\indent As usual, we denote by $\mathcal{\sigma} (\mathcal{A})$ and $\mathcal{\rho} (\mathcal{A})$ the spectrum and the resolvent set of operator $\mathcal{A}$, respectively. We also set
\begin{equation}\label{defineCnegetive}
	\mathbb{C}_{-} = \left\{{\lambda\in \mathbb{C}\;\, \vert \;\, {\rm Re}\, \lambda < 0 }\right\}, \qquad \mathbb{R}_-=\mathbb{C}_-\cap \mathbb{R}.
\end{equation}

In order to state the main theorem of this section, we need four technical lemmas.
Although based on elementary facts, the proofs of these four lemmas require a certain computational effort, so that they are postponed to the appendix in subsection \ref{AppendixA}.
The first one of these lemmas is:

\begin{lemma}\label{le1}
	Assume that $\mu>0$ and $\lambda \in \mathbb{C}\verb|\| \{0\}$, $\lambda \neq -\frac{1}{\mu}$. Then $\frac{\lambda^2}{1+\mu\lambda}\in \mathbb{R}_-$ if and only if
	\begin{equation}
		\lambda \in \left(-\infty, -\frac{1}{\mu}\right) \quad {\rm or} \quad \left\vert \lambda+\frac{1}{\mu}\right\vert=\frac{1}{\mu} \ .
	\end{equation}
\end{lemma}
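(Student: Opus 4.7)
The lemma characterizes when $w := \lambda^2/(1+\mu\lambda)$ is strictly negative real. My plan is to substitute $s := 1 + \mu\lambda$ (so that $\lambda = (s-1)/\mu$), which rewrites
\[
w \;=\; \frac{(s-1)^2}{\mu^2\, s} \;=\; \frac{1}{\mu^2}\left(s - 2 + \frac{1}{s}\right),
\]
valid since $s \neq 0$ is exactly the assumption $\lambda \neq -1/\mu$. The condition $w \in \mathbb{R}_-$ is then equivalent to $s + 1/s$ being real and strictly less than $2$, which I will analyze in polar form.

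Writing $s = r e^{i\theta}$ with $r > 0$ and $\theta \in (-\pi, \pi]$, a short computation yields $s + s^{-1} = (r + r^{-1})\cos\theta + i(r - r^{-1})\sin\theta$. Vanishing of the imaginary part splits into two (possibly overlapping) cases. In the first case, $\sin\theta = 0$: then $s$ is a nonzero real number and $\lambda \in \mathbb{R}\setminus\{0, -1/\mu\}$, while $s + s^{-1} - 2 = (s-1)^2/s$ is strictly negative if and only if $s < 0$, i.e., $\lambda < -1/\mu$. In the second case, $r = 1$: this is $|1 + \mu\lambda| = 1$, which is the circle $|\lambda + 1/\mu| = 1/\mu$; on this circle, $s + s^{-1} - 2 = 2(\cos\theta - 1) \le 0$ with equality only when $s = 1$, i.e., $\lambda = 0$, excluded by hypothesis.

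Combining the two cases yields the forward implication, and the converse follows by reading the same formulas backwards. The main obstacle is purely algebraic, namely spotting the substitution $s = 1 + \mu\lambda$ that reduces the quotient to $s + s^{-1}$ up to constants; once this is done, the polar-form analysis is entirely routine, and there is no conceptual difficulty. As a consistency check, the two branches of the answer overlap at the real point $\lambda = -2/\mu$, where $s = -1$ lies both on the negative real axis and on the unit circle, and indeed $w = -4/\mu^2 \in \mathbb{R}_-$ there.
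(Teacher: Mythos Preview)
Your proof is correct. Both arguments ultimately analyze the vanishing of an imaginary part in polar coordinates, but they differ in which variable is put into polar form. The paper first reduces to $\mu=1$ by homogeneity and writes $\lambda=\rho e^{i\phi}$ directly, then computes the real and imaginary parts of $\lambda^2/(1+\lambda)$ and factors the imaginary-part condition as $\sin\phi\,(2\cos\phi+\rho)=0$; the case $\phi=\pi$ with $\rho>1$ gives the ray, and the case $2\cos\phi+\rho=0$ is shown to be exactly $|1+\lambda|=1$. Your substitution $s=1+\mu\lambda$ is a cleaner route: it collapses the quotient to the familiar expression $s+s^{-1}-2$ without any preliminary reduction on $\mu$, and the polar analysis of $s+s^{-1}$ is textbook. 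The payoff is a shorter computation and a more transparent reason why the circle $|s|=1$ appears; the paper's approach, by contrast, keeps the geometry in the $\lambda$-plane throughout and derives the circle condition only after an auxiliary calculation of $|1+\lambda|^2$.
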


The second auxiliary result to be proved in the Appendix A is:

\begin{lemma}\label{le2}
	Let $a>0$ and $\mu>0$. We assume that $\lambda\in \mathbb{C}$ satisfies
	\begin{equation}
		\lambda \not\in \left(-\infty, -\frac{1}{\mu}\right] \quad {\rm and}\quad \left\vert \lambda+\frac{1}{\mu}\right\vert \neq \frac{1}{\mu}\ .\label{C1}
	\end{equation}
	Let
	\begin{equation}
		\omega_\lambda:=\sqrt{\frac{\lambda^2}{1+\mu\lambda}}\, ,\label{DEOMGA}
	\end{equation}
	where $\sqrt{\cdot}$ stands for the principal determination of the square root function. (Note that this definition
is correct due to  Lemma \ref{le1}.)

Let $\mathcal{M}_\lambda$ be defined by
	\begin{align}
		\mathcal{M}_\lambda= \begin{bmatrix}
			\lambda\left(1+\frac{8a^3}{3}\right)+2a\left(\mu +\frac{1}{\lambda}\right)+\frac{4a^2\lambda}{\omega_\lambda}&\ & -\lambda \left(1-\frac{4a^3}{3}\right)-2a\left(\mu +\frac{1}{\lambda}\right)\\
			-\lambda \left(1-\frac{4a^3}{3}\right)-2a\left(\mu +\frac{1}{\lambda}\right) &\ & \lambda\left(1+\frac{8a^3}{3}\right)+2a\left(\mu +\frac{1}{\lambda}\right)+\frac{4a^2\lambda}{\omega_\lambda}
		\end{bmatrix}.		\label{DMLAMBDA}
	\end{align}
	Then, there exists a set $\mathcal{S}\subset\mathbb{C}_-$, having at most four elements, such that for every $\lambda$ satisfying \eqref{C1} and not lying in $\mathcal{S}$ we have that the matrix $\mathcal{M}_\lambda$ is invertible.
\end{lemma}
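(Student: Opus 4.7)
The plan is to exploit the $2\times 2$ block structure of $\mathcal{M}_\lambda$: its two diagonal entries coincide (call their common value $A$) and so do its two off-diagonal entries (call them $B$), so $\det\mathcal{M}_\lambda = A^{2}-B^{2} = (A-B)(A+B)$. Invertibility therefore reduces to showing that each of $A+B$ and $A-B$ is nonzero away from a finite subset of $\mathbb{C}_-$.

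For $A+B$, a direct simplification collapses the $\lambda(1\pm\tfrac{k a^{3}}{3})$ terms and cancels the $\mu+1/\lambda$ contributions, leaving $A+B=4a^{2}\lambda\bigl(a+\tfrac{1}{\omega_\lambda}\bigr)$. Because $\omega_\lambda$ is the principal square root of a number lying off $(-\infty,0]$ (this is precisely the content of \eqref{C1} together with Lemma \ref{le1}), one has $\Re\omega_\lambda>0$ and hence $\Re(1/\omega_\lambda)>0$; combined with $\lambda\neq 0$ this forces $A+B\neq 0$ for every admissible $\lambda$, contributing no elements to $\mathcal{S}$.

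For $A-B$, the same kind of simplification gives $A-B=2\alpha\lambda+4a\mu+\tfrac{4a}{\lambda}+\tfrac{4a^{2}\lambda}{\omega_\lambda}$, where $\alpha:=1+\tfrac{2a^{3}}{3}>0$. To bound its zero set I would isolate the only irrational term $\tfrac{4a^{2}\lambda}{\omega_\lambda}$, square the resulting equation, and replace $\lambda^{2}/\omega_\lambda^{2}$ by $1+\mu\lambda$ via \eqref{DEOMGA}; after multiplying through by $\lambda^{2}$ this becomes a polynomial identity of degree $4$ in $\lambda$, so $A-B$ can vanish on at most four values of $\lambda$.

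It remains to localize these zeros in $\mathbb{C}_-$, i.e.\ to verify that $A-B\neq 0$ whenever $\Re\lambda\ge 0$ and $\lambda\neq 0$. The contributions $2\alpha\lambda$ and $4a/\lambda$ have nonnegative real part (since $\lambda$ and $1/\lambda$ share the sign of $\Re\lambda$) and $4a\mu>0$. For the last term, $(\lambda/\omega_\lambda)^{2}=1+\mu\lambda$ has real part $\ge 1$ on the closed right half-plane, and on the positive real axis $\lambda/\omega_\lambda=\sqrt{1+\mu\lambda}>0$; by analytic continuation of $\lambda\mapsto\lambda/\omega_\lambda$ on the simply connected domain $\{\Re\lambda\ge 0\}\setminus\{0\}$ one concludes that $\lambda/\omega_\lambda$ equals the principal branch of $\sqrt{1+\mu\lambda}$ throughout, and in particular $\Re(\lambda/\omega_\lambda)>0$ there. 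Summing, $\Re(A-B)\ge 4a\mu>0$ on this region, so no zero of $A-B$ sits in the closed right half-plane. The only mildly delicate step is this last branch identification; everything else is routine algebra and polynomial degree counting.
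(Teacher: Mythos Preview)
Your proof is correct and follows essentially the same route as the paper: the factorization $\det\mathcal{M}_\lambda=(A+B)(A-B)=4a^{2}\lambda\bigl(a+\tfrac{1}{\omega_\lambda}\bigr)\cdot\bigl[\lambda(2+\tfrac{4a^{3}}{3})+4a(\mu+\tfrac{1}{\lambda})+\tfrac{4a^{2}\lambda}{\omega_\lambda}\bigr]$ is exactly what the paper computes, and both the ``$A+B\neq 0$ via $\Re\omega_\lambda>0$'' step and the ``square, clear denominators, degree-four polynomial'' count for $A-B$ match the paper's argument. The only cosmetic difference is that for $\Re(\lambda/\omega_\lambda)\ge 0$ on the closed right half-plane the paper uses an explicit polar-coordinate computation of $\arg\omega_\lambda$, whereas you identify $\lambda/\omega_\lambda$ with the principal branch of $\sqrt{1+\mu\lambda}$ by analytic continuation; both are valid and yield the same bound $\Re(A-B)\ge 4a\mu$.
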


An important role in proving our resolvent estimates is played by the following result:

\begin{lemma}\label{le3}
	Let $\omega\in \mathbb{C}$, ${\rm Re}\, \omega>0$ and $a>0$. We define the following operators:
	\begin{align}
		&\left(D_-(\omega)\gamma _-\right)(x)=\gamma_- \exp[\omega(a+x)] &(&\gamma_-\in \mathbb{C},\quad x\leq -a)\label{C4},\\
		&\label{defintion_of_operator_D_plus}\left(D_+(\omega)\gamma _+\right)(x)=\gamma_+ \exp[\omega(a-x)] &(&\gamma_+\in \mathbb{C},\quad x\geq a),
	\end{align}
	\begin{align}
		&\left(R_-(\omega)\varphi\right)(x)\notag\\
		=&\frac{1}{2\omega}\left(\int_{x}^{-a}\exp(-\omega \xi)\varphi (\xi) {\rm d}\xi\right)\exp(\omega x)+\frac{1}{2\omega}\left(\int_{-\infty}^{x}\exp(\omega \xi)\varphi(\xi) {\rm d}\xi\right)\exp(-\omega x) \nonumber\\
		&-\frac{1}{2\omega}\left(\int_{-\infty}^{-a}\exp(\omega \xi)\varphi(\xi) {\rm d}\xi\right)\exp(2\omega a)\exp(\omega x)\qquad\quad \left(\varphi\in L^2(-\infty, -a), \;x\leq -a\right),\\
		&\left(R_+(\omega)\varphi\right)(x)\notag\\
		=&\frac{1}{2\omega}\left(\int_{a}^{x}\exp(\omega \xi)\varphi (\xi) {\rm d}\xi\right)\exp(-\omega x)+\frac{1}{2\omega}\left(\int_{x}^{\infty}\exp(-\omega \xi)\varphi(\xi) {\rm d}\xi\right)\exp(\omega x) \nonumber\\
		&\label{defintion_of_operator_R_plus} -\frac{1}{2\omega}\left(\int_{a}^{\infty}\exp(-\omega \xi)\varphi(\xi) {\rm d}\xi\right)\exp(2\omega a)\exp(-\omega x)\qquad\quad \left(\varphi\in L^2(a, \infty),\; x\geq a\right).
	\end{align}
	Then we have
	\begin{align}
		&D_-(\omega)\in \mathcal{L}\left(\mathbb{C},\;L^2(-\infty,-a)\right), &D&_+(\omega)\in \mathcal{L}\left(\mathbb{C},\;L^2(a,\infty)\right),\notag\\
		&R_-(\omega)\in \mathcal{L}\left(L^2(-\infty,-a)\right),
		&R&_+(\omega)\in \mathcal{L}\left(L^2(a,\infty)\right),\notag
	\end{align}
	with the norm estimates
	\begin{align}
		&\Vert D_-(\omega)\Vert_{\mathcal{L}\left(\mathbb{C},\;L^2(-\infty, -a)\right)}\leq \left(\frac{1}{2{\rm Re}\,\omega}\right)^{\frac{1}{2}},\quad &\Vert& D_+(\omega)\Vert_{\mathcal{L}\left(\mathbb{C},\;L^2(a,\infty)\right)}\leq \left(\frac{1}{2{\rm Re}\,\omega}\right)^{\frac{1}{2}},\label{NBD}\\
		&	\Vert R_-(\omega)\Vert_{\mathcal{L}(L^2(-\infty, -a))}\leq \frac{3}{2\vert \omega\vert {\rm Re}\,\omega},\quad &\Vert& R_+(\omega)\Vert_{\mathcal{L}(L^2(a,\infty))}\leq \frac{3}{2\vert \omega\vert {\rm Re}\, \omega}.\label{NRB}
	\end{align}
\end{lemma}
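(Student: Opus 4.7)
The plan is to handle the four bounds separately, using the reflection $x\mapsto -x$ (which carries $(-\infty,-a)$ to $(a,\infty)$ and interchanges the two defining expressions) to reduce the $+$ cases to the $-$ cases. I would therefore concentrate on $D_-(\omega)$ and $R_-(\omega)$.

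For $D_-(\omega)$, the bound is an immediate one-line computation based on $|e^{\omega(a+x)}|^2 = e^{2\,\mathrm{Re}\,\omega\,(a+x)}$:
\begin{equation*}
\|D_-(\omega)\gamma_-\|_{L^2(-\infty,-a)}^2 = |\gamma_-|^2\int_{-\infty}^{-a} e^{2\,\mathrm{Re}\,\omega\,(a+x)}\,dx = \frac{|\gamma_-|^2}{2\,\mathrm{Re}\,\omega},
\end{equation*}
which is exactly \eqref{NBD}. In passing this also gives the useful identity $\|e^{\omega\,\cdot}\|_{L^2(-\infty,-a)} = e^{-\mathrm{Re}\,\omega\,a}/\sqrt{2\,\mathrm{Re}\,\omega}$, which will be reused below.

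For $R_-(\omega)$ I would split the defining formula into its three summands $T_1,T_2,T_3$ (in the order in which they appear in the statement) and estimate each piece separately in $L^2(-\infty,-a)$, closing via the triangle inequality. The pieces $T_1$ and $T_2$ are integral operators with kernels of the form $(2\omega)^{-1}e^{\omega(x-\xi)}$ supported on $\{x<\xi<-a\}$ and $(2\omega)^{-1}e^{-\omega(x-\xi)}$ supported on $\{\xi<x<-a\}$, respectively. Schur's test applied to each --- the marginal integrals all reducing to $\int_0^{+\infty} e^{-\mathrm{Re}\,\omega\,s}\,ds = 1/\mathrm{Re}\,\omega$ --- yields $\|T_i\|_{L^2\to L^2}\leq 1/(2|\omega|\,\mathrm{Re}\,\omega)$ for $i=1,2$. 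The third summand
\begin{equation*}
T_3\varphi(x) = -\frac{1}{2\omega}e^{2\omega a}e^{\omega x}\int_{-\infty}^{-a} e^{\omega\xi}\varphi(\xi)\,d\xi
\end{equation*}
is a rank-one operator; Cauchy--Schwarz on the integral above, together with the identity for $\|e^{\omega\,\cdot}\|_{L^2(-\infty,-a)}$ recorded in the previous paragraph, gives $\|T_3\varphi\|_{L^2}\leq \|\varphi\|_{L^2}/(4|\omega|\,\mathrm{Re}\,\omega)$. Summing the three contributions I get $\|R_-(\omega)\varphi\|_{L^2}\leq \tfrac{5}{4}\|\varphi\|_{L^2}/(|\omega|\,\mathrm{Re}\,\omega)\leq \tfrac{3}{2}\|\varphi\|_{L^2}/(|\omega|\,\mathrm{Re}\,\omega)$, which is \eqref{NRB}.

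There is no real conceptual obstacle; the work is bookkeeping, with the only mild care being to separate $|\omega|$ (appearing in prefactors such as $1/(2\omega)$) from $\mathrm{Re}\,\omega$ (appearing in exponentials via $|e^{\omega s}|=e^{\mathrm{Re}\,\omega\,s}$). A conceptually cleaner but longer alternative would be to verify by direct differentiation that $u:=R_-(\omega)\varphi$ solves the resolvent problem $-u''+\omega^2 u = \varphi$ on $(-\infty,-a)$ with $u(-a)=0$ and decay at $-\infty$, and then extract the norm bound from the corresponding energy identity on the half-line; however, the Schur-type estimate above is quicker and yields the explicit constants required.
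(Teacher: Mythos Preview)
Your argument is correct. The computation for $D_\pm$ matches the paper exactly, and the reflection symmetry you invoke to reduce the $+$ case to the $-$ case is the same device the paper uses (in reverse order).

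For the $R_\pm$ bounds, however, you follow a genuinely different route from the paper. The paper estimates each of the three summands of $R_+(\omega)$ separately in $\mathcal{L}(L^\infty(a,\infty))$ and in $\mathcal{L}(L^1(a,\infty))$, obtaining $\tfrac{1}{2|\omega|\,\mathrm{Re}\,\omega}$ for each piece in each space, hence $\tfrac{3}{2|\omega|\,\mathrm{Re}\,\omega}$ in both endpoint spaces; it then invokes the Riesz--Thorin interpolation theorem to transfer this bound to $\mathcal{L}(L^2)$. Your approach stays entirely in $L^2$: Schur's test on the two convolution-type pieces and a direct Cauchy--Schwarz estimate on the rank-one piece. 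This is shorter, avoids interpolation machinery, and in fact yields the slightly sharper constant $\tfrac{5}{4}$ before you relax it to $\tfrac{3}{2}$. The paper's method, on the other hand, is more uniform across the three summands (each handled identically, no special treatment of the rank-one term) and would generalise more mechanically to $L^p$ for other $p$. Either approach is perfectly adequate here.
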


Finally, the structure of the resolvent of our semigroup generator will be show to be a consequence of the following lemma:

\begin{lemma}\label{le4}
	Let $a>0$ and $\mu >0$ and assume that $\lambda\in \mathbb{C}$ satisfies condition \eqref{C1}. Then for every $\varphi\in L^2(\mathcal{E})$ and every $\gamma_-$, $\gamma_+\in \mathbb{C}$, there exists a unique $q_\lambda \in H^2(\mathcal{E})$ such that
	\begin{align}
		\left\{
		\begin{aligned}
			-&\frac{{\rm d}^2 q_\lambda}{{\rm d}x^2}+\omega^2_\lambda q_\lambda=\varphi(x)\qquad\qquad (x\in\mathcal{E}),\\
			&q_\lambda (-a)=\gamma_-, \quad q_\lambda (a)=\gamma_+,
		\end{aligned}\right.\label{Eqqlad}
	\end{align}
	where $\omega_\lambda$ is defined in \eqref{DEOMGA}. Moreover, we have
	\begin{align}\label{whole_whale}
		q_\lambda(x)=\left\{
		\begin{aligned}
			&[D_-(\omega_\lambda)\gamma_-](x)+[R_-(\omega_\lambda)\varphi](x)&(&x\leq-a),\\\\
			&[D_+(\omega_\lambda)\gamma_+](x)+[R_+(\omega_\lambda)\varphi](x)&(&x\geq a),
		\end{aligned}\right.
	\end{align}
	where the operators $D_+(\omega_\lambda)$, $D_-(\omega_\lambda)$, $R_+(\omega_\lambda)$ and $R_-(\omega_\lambda)$ have been defined in Lemma \ref{le3}.	
\end{lemma}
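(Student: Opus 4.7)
The plan is to exploit the fact that problem \eqref{Eqqlad} decouples into two independent boundary value problems on the half-lines $(-\infty,-a)$ and $(a,\infty)$, to solve each by the standard variation of parameters (Green's function) construction, and to verify that the explicit formula \eqref{whole_whale} delivers the unique $H^2$ solution.

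I would start with uniqueness. Condition \eqref{C1} together with Lemma \ref{le1} ensures that $\lambda^2/(1+\mu\lambda)\notin(-\infty,0]$, so the principal branch of the square root yields $\operatorname{Re}\omega_\lambda>0$. On the half-line $(-\infty,-a)$ the homogeneous equation $-q''+\omega_\lambda^2 q=0$ has the linearly independent solutions $e^{\omega_\lambda x}$ and $e^{-\omega_\lambda x}$, and only the first is square-integrable at $-\infty$. The boundary condition $q(-a)=0$ then forces the remaining constant to vanish, so that $q\equiv 0$. The argument on $(a,\infty)$ is symmetric, and this yields uniqueness.

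For existence, I would verify directly that the right-hand side of \eqref{whole_whale} solves \eqref{Eqqlad}. By inspection, $D_\pm(\omega_\lambda)\gamma_\pm$ solves the homogeneous equation and takes the value $\gamma_\pm$ at $\pm a$. For $R_\pm(\omega_\lambda)\varphi$, differentiating the explicit formula from Lemma \ref{le3} twice (taking care of the boundary contributions produced by the Leibniz rule applied to the variable limits) yields the identity $-q''+\omega_\lambda^2 q=\varphi$ on the corresponding half-line, while evaluating at the finite endpoint makes the two incomplete integrals collapse and the remaining exponential pieces cancel exactly, giving $R_\pm(\omega_\lambda)\varphi(\pm a)=0$. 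Thus both the equation and the prescribed boundary values are satisfied.

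To conclude I would check the $H^2$ regularity. The $L^2$ bounds \eqref{NBD}--\eqref{NRB} of Lemma \ref{le3}, combined with the assumption $\varphi\in L^2(\mathcal{E})$ and $\gamma_\pm\in\mathbb{C}$, yield $q_\lambda\in L^2(\mathcal{E})$. Rewriting the differential equation as $q_\lambda''=\omega_\lambda^2 q_\lambda-\varphi$ on each half-line then shows that $q_\lambda''\in L^2(\mathcal{E})$, so that $q_\lambda\in H^2(\mathcal{E})$. The only real obstacle is the careful bookkeeping when differentiating $R_\pm(\omega_\lambda)\varphi$, where one must verify that the boundary terms generated by the Leibniz rule combine to produce exactly $\varphi$; all the decay and integrability properties invoked along the way hinge on the sign condition $\operatorname{Re}\omega_\lambda>0$ furnished by Lemma \ref{le1}.
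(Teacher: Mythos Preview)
Your proof is correct and follows essentially the same route as the paper: both exploit the decoupling into two half-line problems, the fact that $\operatorname{Re}\omega_\lambda>0$ singles out the decaying fundamental solution, and the $L^2$ bounds of Lemma~\ref{le3} to conclude $H^2$ regularity via the equation itself. The only organizational difference is that the paper \emph{derives} formula \eqref{whole_whale} by writing the general solution and fixing the constants from integrability and the boundary condition, whereas you take the formula as given and \emph{verify} it; mathematically these are the same argument read in opposite directions.
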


The main result in this section is:

\begin{theorem}\label{theo 2}
	For every $a>0$ and $\mu >0$, there exists a set $\mathcal{S}\subset \mathbb{C}_-$, having at most four elements, such that the spectrum of the operator $\mathcal{A}$ defined in \eqref{defineX}-\eqref{addes_label} satisfies
	\begin{equation}\label{sperA}
		\sigma(\mathcal{A})\subset E,
	\end{equation}
	where $E$ is defined by
	\begin{equation}
		E=\{0\}\cup \mathcal{S}\cup \left(-\infty, -\frac{1}{\mu}\right]\cup \left\{\lambda \in \mathbb{C}_-\quad \left\vert \quad \left\vert \lambda +\frac{1}{\mu}\right \vert =\frac{1}{\mu}\right\}\right..\notag
	\end{equation}
   Moreover,  for every $\lambda\not\in E$, $\mathcal{F}=[f_1\;\, f_2\;\, f_3\;\, f_4 \;\, f_5]^\top \in \mathcal{X}$  we denote
	\begin{align}
		&\label{resovlent_equation}(\lambda \mathbb{I}-\mathcal{\mathcal{A}})^{-1}\mathcal{F}=z_\lambda=[H_\lambda\;\,   h_\lambda\;\, q_\lambda \;\, q _{-,\lambda}\;\,  q _{+,\lambda}]^\top \in \mathcal{D}(\mathcal{A}).
	\end{align}
Then we have
	\begin{align}\label{REeq}
		&\left[\begin{gathered}
			q_{-,\lambda}\\
			q_{+,\lambda}
		\end{gathered}\right]\notag\\
		=&\mathcal{M}^{-1}_\lambda \left(M^{-1} \left[\begin{gathered}
		f_{4}\\
		f_{5}
		\end{gathered}\right]+\frac{4a^2}{\lambda}\left[\begin{gathered}
		f_{2}(-a^-)-f_1\\
		f_{1}-f_{2}\left(a^+\right)
		\end{gathered}\right]+\frac{4a^2\lambda}{\omega^2_\lambda}\left[\begin{gathered}
		-\int_{-\infty}^{0}\exp(\omega_\lambda\xi)\varphi_\lambda (\xi-a){\rm d}\xi\\
		\int_{0}^{\infty}\exp(-\omega_\lambda\xi)\varphi_\lambda (\xi+a){\rm d}\xi
		\end{gathered}\right]\right),
	\end{align}
where\begin{equation}
	\varphi_\lambda(x)=\frac{\lambda}{1+\mu\lambda}f_3(x)-\frac{1}{1+\mu\lambda}\frac{{\rm d}f_2}{{\rm d}x}(x)\qquad\qquad (x\in \mathcal{E}),\notag
\end{equation}
 and $\omega_\lambda$, $\mathcal{M}_\lambda$ have been defined in \eqref{DEOMGA} and \eqref{DMLAMBDA}, respectively. In addition, we have
\begin{align}
	&q_\lambda(x)=\left\{\begin{aligned}
	&[D_-(\omega_\lambda)q_{-,\lambda}](x)+[R_-(\omega_\lambda)\varphi_{\lambda}](x) \quad\;\, &(&x\leq-a),\\
	&[D_+(\omega_\lambda)q_{+,\lambda}](x)+[R_+(\omega_\lambda)\varphi_{\lambda}](x)  &(&x\geq a),
	\end{aligned}
	\right. \label{Expq}\\
	&h_\lambda(x)=\frac{1}{\lambda}\left(f_2(x)-\frac{{\rm d}	q_\lambda(x)}{{\rm d}x}\right) \qquad\qquad\qquad\qquad\;\;\, (x\in \mathcal{E}),\label{Exph}\\
	&H_\lambda=\frac{1}{\lambda}\left(f_1-\frac{q_{+,\lambda}-q_{-,\lambda}}{2a}\right),\label{ExpH}
\end{align}
where the operators $D_-(\omega_\lambda)$, $D_+(\omega_\lambda)$, $R_-(\omega_\lambda)$ and $R_+(\omega_\lambda)$ have been defined in Lemma \ref{le3}.
\end{theorem}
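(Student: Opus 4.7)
The plan is to solve the resolvent equation $(\lambda\mathbb{I}-\mathcal{A})z_\lambda=\mathcal{F}$ explicitly for every $\mathcal{F}=[f_1\ f_2\ f_3\ f_4\ f_5]^\top\in\mathcal{X}$ and every $\lambda\notin E$, thereby simultaneously establishing $\lambda\in\rho(\mathcal{A})$ and the stated formulas \eqref{REeq}--\eqref{ExpH}. Note that the hypothesis $\lambda\notin E$ means $\lambda\ne 0$, $\lambda\notin(-\infty,-1/\mu]$, $\lvert\lambda+1/\mu\rvert\ne 1/\mu$, and $\lambda\notin\mathcal{S}$, so that Lemmas \ref{le1}, \ref{le2} and \ref{le4} are all applicable, with $\omega_\lambda$ well-defined and $\mathrm{Re}\,\omega_\lambda>0$.

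Writing the five components of $(\lambda\mathbb{I}-\mathcal{A})z_\lambda=\mathcal{F}$ using \eqref{defineDA}--\eqref{addes_label}, the first two lines give immediately
\[
H_\lambda=\frac{1}{\lambda}\!\left(f_1-\frac{q_{+,\lambda}-q_{-,\lambda}}{2a}\right),\qquad h_\lambda=\frac{1}{\lambda}\!\left(f_2-\frac{dq_\lambda}{dx}\right),
\]
that is, \eqref{ExpH} and \eqref{Exph}. Substituting these into the momentum equation and multiplying through by $\lambda/(1+\mu\lambda)$ yields the elliptic problem
\[
-\frac{d^{2}q_\lambda}{dx^{2}}+\omega_\lambda^{2}\,q_\lambda=\varphi_\lambda\quad(x\in\mathcal{E}),\qquad q_\lambda(\pm a)=q_{\pm,\lambda},
\]
the boundary conditions coming from the definition of $\mathcal{Z}=\mathcal{D}(\mathcal{A})$. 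Since $\varphi_\lambda\in L^{2}(\mathcal{E})$, Lemma \ref{le4} provides the unique $H^{2}(\mathcal{E})$ solution given by \eqref{Expq}, as soon as the numbers $q_{\pm,\lambda}$ are known.

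To determine $q_{\pm,\lambda}$ we use the remaining two components $\lambda q_{\pm,\lambda}-R_{1,2}z_\lambda=f_{4,5}$. Applying $M^{-1}$ to \eqref{addes_label} and inserting the above expressions for $h_\lambda$ and $H_\lambda$ reduces these to an affine relation between $q_{-,\lambda}, q_{+,\lambda}$ and the trace values $q_\lambda'(\mp a^\mp)$. Differentiating \eqref{whole_whale} with the explicit kernels \eqref{C4}--\eqref{defintion_of_operator_R_plus}, a direct computation at the endpoints gives
\[
q_\lambda'(-a^-)=\omega_\lambda q_{-,\lambda}-\!\int_{-\infty}^{0}\!e^{\omega_\lambda\tau}\varphi_\lambda(\tau-a)\,d\tau,\qquad q_\lambda'(a^+)=-\omega_\lambda q_{+,\lambda}+\!\int_{0}^{\infty}\!e^{-\omega_\lambda\tau}\varphi_\lambda(\tau+a)\,d\tau.
\]
Plugging these in, collecting the $q_{\pm,\lambda}$ unknowns on the left-hand side, and using the identity $(1+\mu\lambda)/\lambda=\lambda/\omega_\lambda^{2}$, the system reorganizes itself into $\mathcal{M}_\lambda\,[q_{-,\lambda},q_{+,\lambda}]^{\top}=\text{RHS}$ with $\mathcal{M}_\lambda$ exactly as in \eqref{DMLAMBDA}; the characteristic $4a^{2}\lambda/\omega_\lambda$ entries on the diagonal arise from the homogeneous parts $\pm\omega_\lambda q_{\pm,\lambda}$ of $q_\lambda'(\mp a^\mp)$, while the integral contributions to the right-hand side arise from the particular parts $(R_\pm(\omega_\lambda)\varphi_\lambda)'$.

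Finally, the hypothesis $\lambda\notin\mathcal{S}$ combined with Lemma \ref{le2} guarantees that $\mathcal{M}_\lambda$ is invertible, so $[q_{-,\lambda},q_{+,\lambda}]^{\top}$ is uniquely given by \eqref{REeq}. Substituting back into \eqref{Expq}, \eqref{Exph} and \eqref{ExpH} yields a unique $z_\lambda$, and the regularities $q_\lambda\in H^{2}(\mathcal{E})$ (from Lemma \ref{le4}), $h_\lambda\in H^{1}(\mathcal{E})$ (from the formula $h_\lambda=(f_2-q_\lambda')/\lambda$ with $f_2\in H^{1}$ and $q_\lambda'\in H^{1}$) together with the compatibility $q_\lambda(\pm a)=q_{\pm,\lambda}$ built into Lemma \ref{le4} ensure $z_\lambda\in\mathcal{Z}=\mathcal{D}(\mathcal{A})$. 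Thus $\lambda\in\rho(\mathcal{A})$ and the announced resolvent formulas hold, proving \eqref{sperA}. The main obstacle is the algebraic bookkeeping needed to recognize the $2\times 2$ system emerging from the boundary equations as the specific matrix $\mathcal{M}_\lambda$ of \eqref{DMLAMBDA}; every other step reduces to a direct application of one of the four preparatory lemmas.
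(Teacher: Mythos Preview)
Your proposal is correct and follows essentially the same route as the paper: write out the five scalar components of the resolvent equation, eliminate $H_\lambda$ and $h_\lambda$ to reduce the $q$-component to the elliptic problem solved by Lemma~\ref{le4}, compute the boundary traces $q_\lambda'(\pm a^\pm)$ from the explicit kernels, and plug these into the last two equations to recover the $2\times 2$ system with matrix $\mathcal{M}_\lambda$, which Lemma~\ref{le2} inverts. Your derivative formulas at $\pm a$ coincide with the paper's after the change of variable $\tau=\xi\mp a$, and your explicit regularity check that $z_\lambda\in\mathcal{Z}$ is a welcome addition that the paper leaves implicit.
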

\begin{proof}
	We firstly remark that the resolvent equation \eqref{resovlent_equation} can be written as:
	\begin{align}
		&\label{R1}\lambda H_\lambda+\frac{q_{+,\lambda}-q_{-,\lambda}}{2a}=f_1,\\
		&\label{R2}\lambda h_\lambda(x)+\frac{{\rm d}q_\lambda}{{\rm d}x}(x)=f_2(x)  \qquad\qquad\qquad\quad\;(x\in \mathcal{E}),\\
		&\label{R3}\lambda q_\lambda(x)+\frac{{\rm d}h_\lambda}{{\rm d}x}(x)-\mu \frac{{\rm d}^2 q_\lambda}{{\rm d}x^2}(x)=f_3(x) \qquad (x\in \mathcal{E}),\\
		&\label{R4} q_\lambda(-a)=q_{-,\lambda},\qquad\quad q_\lambda(a)=q_{+,\lambda},\\
		&\label{R5}\lambda \left[\begin{gathered}
			q_{-,\lambda}\\
			q_{+,\lambda}
		\end{gathered}\right]-4a^2M\left[\begin{gathered}
		\mu\left(\frac{q_{+,\lambda}-q_{-,\lambda}}{2a}\right)+\left(h_\lambda(-a^-)-H_\lambda\right)-\mu\frac{{\rm d}q_\lambda}{{\rm d} x}(-a^-)\\
		-\mu\left(\frac{q_{+,\lambda}-q_{-,\lambda}}{2a}\right)
		-\left(h_\lambda\left(a^+\right)-H_\lambda\right)+\mu\frac{{\rm d} q_\lambda}{{\rm d} x}\left(a^+\right)
		\end{gathered}	\right]=\left[\begin{gathered}
		f_4\\
		f_5
		\end{gathered}\right],
	\end{align}
	where the matrix $M$ has been defined in \eqref{AM}.
	 From equations \eqref{R2}-\eqref{R4}, it follows that
	\begin{align}
		\left\{
		\begin{aligned}
			-&\frac{{\rm d}^2 q_\lambda}{{\rm d}x^2}(x)+\omega^2_\lambda q_\lambda(x)=\frac{\lambda}{1+\mu\lambda}f_3(x)-\frac{1}{1+\mu\lambda}\frac{{\rm d}f_2}{{\rm d}x}(x)\quad (x\in\mathcal{E}),\\
			&q_\lambda (-a)=q_{-,\lambda}, \quad q_\lambda (a)=q_{+,\lambda},\\
		\end{aligned}\right.\label{Eqq}
	\end{align}
This implies, using Lemma \ref{le4}, that \eqref{Expq} holds true. Formulas \eqref{Exph} and \eqref{ExpH}
are then obtained as direct consequences of \eqref{R1} and \eqref{R3}.
	
We still have to prove \eqref{REeq}. To this aim, we  use Lemma \ref{le4} in order to reduce equations \eqref{R5} to a linear algebraic system of unknowns $q_{-,\lambda}$ and $q_{+,\lambda}$. We note that from the first equation in \eqref{Expq}, it follows that for every $x\geqslant a$, we have
	\begin{align}\label{Eedq}
		\frac{{\rm d}q_\lambda}{{\rm d}x}(x)=&-\frac{1}{2}\left(\int_{a}^{x}\exp(\omega_\lambda\xi)\varphi_\lambda(\xi){\rm d}\xi\right)\exp(-\omega_\lambda x)+\frac{1}{2}\left(\int_{x}^{\infty}\exp(-\omega_\lambda\xi)\varphi_\lambda(\xi){\rm d}\xi\right)\exp(\omega_\lambda x)\notag \\
		&-\omega_\lambda q_{+,\lambda} \exp(\omega_\lambda(a-x))+\frac{1}{2}\left(\int_{a}^{\infty}\exp(-\omega_\lambda\xi)\varphi_\lambda(\xi){\rm d}\xi\right)\exp(2\omega_\lambda a)\exp(-\omega_\lambda x).
	\end{align}
Taking the limit when $x\to a^+$ in the above equation, it follows that
	\begin{equation}
		\frac{{\rm d}q_\lambda}{{\rm d}x}\left(a^+\right)=\left(\int_{a}^{\infty}\exp(-\omega_\lambda\xi)\varphi_\lambda(\xi){\rm d}\xi\right)\exp(\omega_\lambda a)-\omega_\lambda q_{+,\lambda}.
	\end{equation}
	In a similar manner, we have
	\begin{equation}
		\frac{{\rm d}q_\lambda}{{\rm d}x}(-a^-)=-\left(\int_{-\infty}^{-a}\exp(\omega_\lambda\xi)\varphi_\lambda(\xi){\rm d}\xi\right)\exp(\omega_\lambda a)+\omega_\lambda q_{-,\lambda}.
	\end{equation}
	Inserting the above two formulas in \eqref{R5} and using the fact that
	\begin{equation}
		\frac{1+\mu \lambda}{\lambda}=\frac{\lambda}{\omega^2_\lambda},\notag
	\end{equation}
	we obtain that
	\begin{align}
			&\lambda M^{-1} \left[\begin{gathered}
			q_{-,\lambda}\\
			q_{+,\lambda}
		\end{gathered}\right]	-4a^2\left[\begin{gathered}
		-\left(\frac{\mu}{2a}+\frac{\lambda}{\omega_\lambda}+\frac{1}{2a\lambda }\right)q_{-,\lambda}+
		\left(\frac{\mu}{2a}+\frac{1}{2a\lambda }
		\right)q_{+,\lambda}\\
		\left(\frac{\mu}{2a}+\frac{1}{2a\lambda }\right)q_{-,\lambda}-\left(\frac{\mu}{2a}+\frac{\lambda}{\omega_\lambda}+\frac{1}{2a\lambda}\right)q_{+,\lambda}
	\end{gathered}\right]\notag\\
	=&M^{-1} \left[\begin{gathered}
		f_{4}\\
		f_{5}
	\end{gathered}\right]+\frac{4a^2}{\lambda}\left[\begin{gathered}
	f_{2}(-a^-)-f_1\\
	f_{1}-f_{2}\left(a^+\right)
	\end{gathered}\right]+\frac{4a^2\lambda}{\omega^2_\lambda}\left[\begin{gathered}
	-\int_{-\infty}^{0}\exp(\omega_\lambda\xi)\varphi_\lambda (\xi-a){\rm d}\xi\\
	\int_{0}^{\infty}\exp(-\omega_\lambda\xi)\varphi_\lambda (\xi+a){\rm d}\xi
	\end{gathered}\right].\notag
\end{align}
The above equation can be rewritten
\begin{equation}
	\mathcal{M}_\lambda \left[\begin{gathered}
		q_{-,\lambda}\\
		q_{+,\lambda}
	\end{gathered}\right]=M^{-1} \left[\begin{gathered}
	f_{4}\\
	f_{5}
	\end{gathered}\right]+\frac{4a^2}{\lambda}\left[\begin{gathered}
	f_{2}(-a^-)-f_1\\
	f_{1}-f_{2}\left(a^+\right)
	\end{gathered}\right]+\frac{4a^2\lambda}{\omega^2_\lambda}\left[\begin{gathered}
	-\int_{-\infty}^{0}\exp(\omega_\lambda\xi)\varphi_\lambda (\xi-a){\rm d}\xi\\
	\int_{0}^{\infty}\exp(-\omega_\lambda\xi)\varphi_\lambda (\xi+a){\rm d}\xi
	\end{gathered}\right],\notag
\end{equation}
where the matrices $M$ and $\mathcal{M}_\lambda$ have been defined in\eqref{AM}, \eqref{DMLAMBDA}, respectively.
By combining the above formula and Lemma \ref{le2}, we obtain \eqref{REeq} and thus the
conclusion of Theorem \ref{theo 2}.
\end{proof}

\section{Semigroup generation and well-posedness}\label{sec_well_new}

In this section we continue to use the notation introduced in the previous one, in particular for the space $\mathcal{X}$ defined in \eqref{defineX} and for the operator $\mathcal{A}$ defined in \eqref{defineDA}-\eqref{addes_label}. We prove below that $\mathcal{A}$ generates an analytic semigroup on $\mathcal{X}$ and we give the proof of Theorem \ref{th_exist_1}.

To proof of the fact that $\mathcal{A}$ generates an analytic semigroup on $\mathcal{X}$ is partially based on two technical results given below. Although based on elementary facts, the proofs of these two lemmas are provided, for completeness, in the Appendix on subsection \ref{Appendix B}. The first of these lemmas is:

\begin{lemma}\label{le5}
	Let $\mu>0$ and $\theta \in [0,\frac{\pi}{2})$. Then
	\begin{equation}
		\frac{\lambda^2}{1+\mu\lambda}\not\in \mathbb{R}_- \qquad\qquad \left(\lambda\in \Sigma_\theta,\; |\lambda|\geq \frac{4}{\mu(1-\sin (\theta))}\right), \label{F1}
	\end{equation}
		and
	\begin{equation}
	{\rm Re}\,\left(\sqrt{\frac{\lambda^2}{1+\mu\lambda}}\right)\geq \frac{1}{4}\sqrt{\frac{|\lambda|(1-\sin(\theta))}{\mu}} \qquad\qquad \left(\lambda\in \Sigma_\theta,\; |\lambda|\geq \frac{4}{\mu(1-\sin (\theta))}\right),\label{F2}
	\end{equation}
	where
\begin{equation}\label{SIGMA_THETA}
\Sigma_\theta=\left\{{\lambda=\rho\exp(i\phi)\in \mathbb{C}\;\, \Big\vert \;\, \arg\phi\in\left(-\frac{\pi}{2}-\theta,\; \frac{\pi}{2}+\theta\right)}\right\} .
\end{equation}
\end{lemma}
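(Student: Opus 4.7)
The two assertions are treated separately. For \eqref{F1}, Lemma \ref{le1} reduces the condition $\lambda^2/(1+\mu\lambda)\in\mathbb{R}_-$ to: either $\lambda\in(-\infty,-1/\mu)$ or $|\lambda+1/\mu|=1/\mu$. The half-line is disjoint from $\Sigma_\theta$ whenever $\theta<\pi/2$, since negative reals have argument $\pi\notin(-\pi/2-\theta,\pi/2+\theta)$. Every point of the circle satisfies $|\lambda|\leq 2/\mu$, whereas the size hypothesis gives $|\lambda|\geq 4/(\mu(1-\sin\theta))\geq 4/\mu>2/\mu$. This yields \eqref{F1}.

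For \eqref{F2}, set $w=\lambda^2/(1+\mu\lambda)$. The plan is to invoke the elementary principal-square-root identity
\[
\mathrm{Re}\sqrt{w}=\sqrt{\frac{|w|+\mathrm{Re}(w)}{2}},
\]
(valid wherever the principal square root is defined), and then to bound $|w|+\mathrm{Re}(w)$ from below. Writing $\lambda=\rho e^{i\phi}$, a direct computation gives $|w|=\rho^{2}/|1+\mu\lambda|$ and $\mathrm{Re}(w)=\rho^{2}(\cos 2\phi+\mu\rho\cos\phi)/|1+\mu\lambda|^{2}$, whence
\[
|w|+\mathrm{Re}(w)=\frac{\rho^{2}}{|1+\mu\lambda|^{2}}\Bigl(|1+\mu\lambda|+\cos 2\phi+\mu\rho\cos\phi\Bigr).
\]
The leading behaviour of the bracket at large $\mu\rho$ is $\mu\rho(1+\cos\phi)=2\mu\rho\cos^{2}(\phi/2)$; the residuals $\cos 2\phi$ and $|1+\mu\lambda|-\mu\rho$ are each bounded below by $-1$ (the second thanks to $|1+\mu\lambda|\geq\mu\rho-1$). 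Since $|\phi|/2<\pi/4+\theta/2<\pi/2$, we have $\cos^{2}(\phi/2)\geq\cos^{2}(\pi/4+\theta/2)=(1-\sin\theta)/2$, and the size condition $\mu\rho\geq 4/(1-\sin\theta)$ is calibrated precisely so that $2\mu\rho\cos^{2}(\phi/2)\geq 4$, comfortably absorbing the $-2$ coming from the residuals.

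Combining this with the easy two-sided bound $(3/4)\mu\rho\leq|1+\mu\lambda|\leq(5/4)\mu\rho$ (again a consequence of $\mu\rho\geq 4$) to control the denominator produces a lower estimate of the form $|w|+\mathrm{Re}(w)\geq c\,\rho\cos^{2}(\phi/2)/\mu$ with an explicit numerical constant $c>0$. Taking square roots and using once more $\cos^{2}(\phi/2)\geq(1-\sin\theta)/2$ delivers $\mathrm{Re}\sqrt{w}\geq c'\sqrt{\rho(1-\sin\theta)/\mu}$ with an explicit $c'$ somewhat larger than $1/4$, so that \eqref{F2} follows with room to spare. The only slightly delicate point is the bookkeeping of the lower bound on the bracket $|1+\mu\lambda|+\cos 2\phi+\mu\rho\cos\phi$ in relation to the calibrated threshold $4/(\mu(1-\sin\theta))$; every other step is routine arithmetic.
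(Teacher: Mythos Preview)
Your proof is correct. For \eqref{F2} you take the same route as the paper: invoke $\mathrm{Re}\sqrt{w}=\sqrt{(|w|+\mathrm{Re}\,w)/2}$, compute $|w|+\mathrm{Re}\,w=\rho^{2}|1+\mu\lambda|^{-2}\bigl(|1+\mu\lambda|+\cos 2\phi+\mu\rho\cos\phi\bigr)$, and bound the bracket from below by a multiple of $\mu\rho(1-\sin\theta)$ using the size hypothesis. Your bookkeeping via $2\mu\rho\cos^{2}(\phi/2)$ and the two residuals $\geq -1$ is a mild repackaging of the paper's chain $\sqrt{1+\rho^{2}+2\rho\cos\phi}+\cos 2\phi+\rho\cos\phi\geq \rho(1-\sin\theta)-2\geq\tfrac12\rho(1-\sin\theta)$ (done at $\mu=1$); your final constant comes out as $2/5$ rather than the paper's sharp $1/4$, but that is immaterial.

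The genuine difference is in \eqref{F1}. The paper argues directly: it writes out $\mathrm{Im}\bigl(\lambda^{2}/(1+\lambda)\bigr)$ and shows it vanishes only at $\phi=0$ under the stated constraints, where the real part is then positive. You instead invoke Lemma~\ref{le1}, which has already characterised $\lambda^{2}/(1+\mu\lambda)\in\mathbb{R}_{-}$ as the union of the half-line $(-\infty,-1/\mu)$ and the circle $|\lambda+1/\mu|=1/\mu$, and then observe that the sector excludes the half-line while the radius bound $|\lambda|\geq 4/\mu>2/\mu$ excludes the circle. This is shorter and more conceptual, since it reuses work already done; the paper's direct computation is self-contained but redundant given Lemma~\ref{le1}.
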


The second technical result, which is also proved in the Appendix on subsection \ref{Appendix B} is:

\begin{lemma}\label{le6}
	Let $a,\,\mu>0$  and $\theta\in\left[0,\frac{\pi}{2}\right)$. Then there exists
 a constant $R(a,\, \mu,\,\theta)$, such that for every
	\begin{equation}
		\lambda \in \Sigma_\theta,\quad \vert\lambda\vert\geq R(a,\,\mu,\,\theta),\label{F6}
	\end{equation}
	the matrix $\mathcal{M}_\lambda$ defined in \eqref{DMLAMBDA} is invertible and there is a constant $K(\theta,\, \mu)>0$, such that
	\begin{equation}
		\left\Vert\lambda\mathcal{M}^{-1}_\lambda\right\Vert\leq K(\theta,\mu)\qquad \left(	\lambda \in \Sigma_\theta,\; \vert\lambda\vert\geq R(a,\,\mu,\,\theta)\right).\label{F7}
	\end{equation}
\end{lemma}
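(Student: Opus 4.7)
The plan is to carry out an asymptotic analysis of $\mathcal{M}_\lambda$ for $|\lambda|$ large in the sector $\Sigma_\theta$, taking advantage of the persymmetric structure
$$\mathcal{M}_\lambda=\begin{bmatrix} A_\lambda & B_\lambda\\ B_\lambda & A_\lambda\end{bmatrix},$$
with
$$A_\lambda=\lambda\left(1+\tfrac{8a^3}{3}\right)+2a\left(\mu+\tfrac{1}{\lambda}\right)+\tfrac{4a^2\lambda}{\omega_\lambda},\qquad B_\lambda=-\lambda\left(1-\tfrac{4a^3}{3}\right)-2a\left(\mu+\tfrac{1}{\lambda}\right).$$
Such a matrix is diagonalized by $(1,\pm 1)^{\top}$, so $\det\mathcal{M}_\lambda=(A_\lambda-B_\lambda)(A_\lambda+B_\lambda)$ and its adjugate has entries $\pm A_\lambda,\ \mp B_\lambda$. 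Thus invertibility together with the bound \eqref{F7} will follow as soon as I prove the two-sided estimates $c|\lambda|\leq |A_\lambda\pm B_\lambda|$ and $|A_\lambda|+|B_\lambda|\leq C|\lambda|$ uniformly for $\lambda\in\Sigma_\theta$ with $|\lambda|$ sufficiently large, because then
$$\|\mathcal{M}_\lambda^{-1}\|\leq \frac{|A_\lambda|+|B_\lambda|}{|A_\lambda-B_\lambda|\cdot|A_\lambda+B_\lambda|}\leq \frac{K(\theta,\mu)}{|\lambda|}.$$

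The first key step is to control $\omega_\lambda$. Since $\omega_\lambda^2=\lambda^2/(1+\mu\lambda)\sim \lambda/\mu$ as $|\lambda|\to\infty$, Lemma \ref{le5} gives $\operatorname{Re}\omega_\lambda\geq\tfrac{1}{4}\sqrt{|\lambda|(1-\sin\theta)/\mu}$, and therefore $|\omega_\lambda|\geq c(\theta,\mu)\sqrt{|\lambda|}$. Consequently $|\lambda/\omega_\lambda|\leq C(\theta,\mu)\sqrt{|\lambda|}$, so the term $4a^2\lambda/\omega_\lambda$ appearing in $A_\lambda$ is of order $\sqrt{|\lambda|}$ and is subdominant with respect to the leading $O(|\lambda|)$ contributions. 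The second step is the explicit computation
$$A_\lambda+B_\lambda=4a^3\lambda+\frac{4a^2\lambda}{\omega_\lambda},\qquad A_\lambda-B_\lambda=\lambda\left(2+\frac{4a^3}{3}\right)+4a\left(\mu+\frac{1}{\lambda}\right)+\frac{4a^2\lambda}{\omega_\lambda}.$$
Both expressions display a strictly positive real coefficient in front of $\lambda$ (namely $4a^3$ and $2+4a^3/3$), while all remaining terms are at most $O(\sqrt{|\lambda|})$ by the previous bound on $\omega_\lambda$. Choosing $R(a,\mu,\theta)$ large enough to swallow these corrections via the triangle inequality yields $|A_\lambda\pm B_\lambda|\geq c(a)|\lambda|$ for $\lambda\in\Sigma_\theta$ with $|\lambda|\geq R(a,\mu,\theta)$, which proves the invertibility claim and gives $|\det\mathcal{M}_\lambda|\geq c(a)|\lambda|^2$.

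The remaining step is a direct upper estimate $|A_\lambda|+|B_\lambda|\leq C(\theta,\mu,a)|\lambda|$, obtained again by checking each summand in the definitions of $A_\lambda$ and $B_\lambda$ and using $|\lambda/\omega_\lambda|=O(\sqrt{|\lambda|})$ together with $|\lambda|\geq R$. Combining this with the determinant lower bound gives the stated bound on $\|\lambda\mathcal{M}_\lambda^{-1}\|$. The main obstacle is not conceptual but a matter of careful bookkeeping, the real content residing in the lower bound $\operatorname{Re}\omega_\lambda\gtrsim\sqrt{|\lambda|}$ inherited from Lemma \ref{le5}: it is precisely this bound that prevents $\omega_\lambda$ from ever being too small and thus guarantees that the anomalous term $4a^2\lambda/\omega_\lambda$ is subdominant with respect to the purely algebraic $\lambda$-contributions which drive the asymptotics of $\mathcal{M}_\lambda$.
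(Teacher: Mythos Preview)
Your proof is correct. Both your argument and the paper's rest on the same key estimate from Lemma~\ref{le5}, namely $|\omega_\lambda|\geq \operatorname{Re}\omega_\lambda\gtrsim\sqrt{|\lambda|}$, which makes the term $4a^2\lambda/\omega_\lambda$ of order $\sqrt{|\lambda|}$ and hence subdominant. The difference lies only in how this is packaged: the paper writes $\lambda^{-1}\mathcal{M}_\lambda=M^{-1}+\mathcal{P}_\lambda$, with $M^{-1}$ the fixed invertible matrix obtained by dropping all lower-order terms and $\|\mathcal{P}_\lambda\|\leq \tilde K(\theta,\mu)|\lambda|^{-1/2}$, and then invokes a Neumann-series bound $\|(M^{-1}+\mathcal{P}_\lambda)^{-1}\|\leq \|M\|/(1-\|M\|\,\|\mathcal{P}_\lambda\|)$. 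You instead exploit the persymmetric structure to reduce the problem to the two scalar quantities $A_\lambda\pm B_\lambda$, which you compute and estimate directly. Your route is arguably more transparent here (it yields the explicit eigenvalue factorization $\det\mathcal{M}_\lambda=(A_\lambda+B_\lambda)(A_\lambda-B_\lambda)$ used also in the proof of Lemma~\ref{le2}), while the paper's perturbation argument is more generic and would apply unchanged even without the special $2\times 2$ symmetry.
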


An important ingredient of the proof of the fact that $\mathcal{A}$ generates an analytic semigroup on $\mathcal{X}$ is the following result:

\begin{proposition}\label{rem1}
	With the notation in Theorem \ref{theo 2}, for every $\lambda\in \Sigma_\theta$, $|\lambda|\geq\frac{4}{\mu(1-\sin\theta)}$ we have
\begin{align}\label{ql}
		\left\Vert q_\lambda\right\Vert_{L^2(\mathcal{E})}\leq \tilde{c}\left(\frac{1}{|\omega_\lambda|{\rm Re}\,\omega_\lambda}\Vert\varphi_\lambda\Vert_{L^2(\mathcal{E})}+\frac{1}{({\rm Re}\, \omega_\lambda)^\frac12}\left\Vert\left[
			\begin{gathered}
				q_{-,\lambda}\\
				q_{+,\lambda}
			\end{gathered}\right]\right\Vert_{\mathbb{C}^2}\right),
	\end{align}
and
\begin{align}\label{dql}
		\left\Vert \frac{{\rm d}q_\lambda}{{\rm d}x}\right\Vert_{L^2(\mathcal{E})}\leq \tilde{c}\left(\frac{1}{{\rm Re}\,\omega_\lambda}\Vert\varphi_\lambda\Vert_{L^2(\mathcal{E})}+\left\Vert\omega_\lambda\left[
			\begin{gathered}
				q_{-,\lambda}\\
				q_{+,\lambda}
			\end{gathered}\right]\right\Vert_{\mathbb{C}^2}\right),
	\end{align}
where $\tilde{c}>0$ is a constant depending only on $a$, $\theta$ and $\mu$.
\end{proposition}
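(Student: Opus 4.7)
The strategy is to exploit the explicit representation \eqref{Expq} of $q_\lambda$ combined with the operator norm bounds of Lemma \ref{le3}, and then use Lemma \ref{le5} to convert the resulting factors involving $\mathrm{Re}\,\omega_\lambda$ into the form stated.

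For \eqref{ql}, I would split $\|q_\lambda\|_{L^2(\mathcal{E})}^2 = \|q_\lambda\|_{L^2(-\infty,-a)}^2+\|q_\lambda\|_{L^2(a,\infty)}^2$ and apply the triangle inequality to the decomposition \eqref{Expq} on each half-line. The bounds \eqref{NBD} and \eqref{NRB} of Lemma \ref{le3} then give immediately
\[
\|q_\lambda\|_{L^2(\mathcal{E})} \le \frac{1}{\sqrt{2\,\mathrm{Re}\,\omega_\lambda}}\bigl(|q_{-,\lambda}|+|q_{+,\lambda}|\bigr)+\frac{3}{2|\omega_\lambda|\,\mathrm{Re}\,\omega_\lambda}\|\varphi_\lambda\|_{L^2(\mathcal{E})},
\]
which is already of the form claimed in \eqref{ql}.

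For \eqref{dql}, I would differentiate each of the four pieces on the right-hand side of \eqref{Expq}. The derivatives of $D_\pm(\omega_\lambda)\gamma$ are simply $\pm\omega_\lambda\, D_\pm(\omega_\lambda)\gamma$, whence $\|\tfrac{d}{dx}D_\pm(\omega_\lambda)\gamma\|_{L^2}\le |\omega_\lambda|(2\,\mathrm{Re}\,\omega_\lambda)^{-1/2}|\gamma|$ by \eqref{NBD}. For $R_-(\omega_\lambda)\varphi$ (and symmetrically $R_+$), a direct differentiation under the integral produces pointwise terms $-\varphi(x)/(2\omega_\lambda)$ and $+\varphi(x)/(2\omega_\lambda)$ from the first two summands which cancel exactly; the surviving expression has the same three exponential kernels as $R_-(\omega_\lambda)$ but without the $1/(2\omega_\lambda)$ prefactor. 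Running Young's inequality in the same way as in the proof of \eqref{NRB} therefore yields $\|\tfrac{d}{dx}R_\pm(\omega_\lambda)\|_{\mathcal{L}(L^2)}\le C/\mathrm{Re}\,\omega_\lambda$ for a numerical constant $C$.

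Combining these gives
\[
\Bigl\|\tfrac{{\rm d}q_\lambda}{{\rm d}x}\Bigr\|_{L^2(\mathcal{E})} \le \frac{|\omega_\lambda|}{\sqrt{2\,\mathrm{Re}\,\omega_\lambda}}\bigl(|q_{-,\lambda}|+|q_{+,\lambda}|\bigr)+\frac{C}{\mathrm{Re}\,\omega_\lambda}\|\varphi_\lambda\|_{L^2(\mathcal{E})}.
\]
To obtain the precise form \eqref{dql}, I would invoke Lemma \ref{le5}: under the hypothesis $|\lambda|\ge 4/(\mu(1-\sin\theta))$, one has $\mathrm{Re}\,\omega_\lambda\ge \tfrac{1}{4}\sqrt{|\lambda|(1-\sin\theta)/\mu}\ge 1/(2\mu)$, which is bounded below by a constant depending only on $\mu$ and $\theta$. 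Hence $(\mathrm{Re}\,\omega_\lambda)^{-1/2}$ can be absorbed into $\tilde c$, turning the first term into $\tilde c\,|\omega_\lambda|(|q_{-,\lambda}|+|q_{+,\lambda}|)$ as required. The only nontrivial step in the whole argument is the careful bookkeeping of the derivative of $R_\pm(\omega_\lambda)$ — in particular verifying the cancellation of the boundary contributions — but this is an elementary manipulation, so I do not anticipate any real obstacle.
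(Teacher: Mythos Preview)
Your proposal is correct and follows essentially the same route as the paper: apply the operator bounds of Lemma~\ref{le3} to the explicit formula \eqref{Expq}, differentiate the $D_\pm$ and $R_\pm$ pieces for the derivative estimate, and then absorb the residual factor $(\mathrm{Re}\,\omega_\lambda)^{-1/2}$ into $\tilde c$. The paper's own proof is much terser --- it simply writes ``Similarly, we obtain'' for \eqref{dql} without spelling out the differentiation of $R_\pm$ or the cancellation you note --- and its final step invokes the inequality $(\mathrm{Re}\,\omega_\lambda)^{-1/2}\geqslant 1$, which appears to be a misprint; your use of Lemma~\ref{le5} to get the uniform lower bound $\mathrm{Re}\,\omega_\lambda\geqslant 1/(2\mu)$ is the correct justification.
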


\begin{proof}
 Firstly, by combining \eqref{Expq} and Lemma \ref{le3}, we have
 \begin{equation*}
 	\left\Vert q_\lambda \right\Vert_{L^2(\mathcal{E})}\leq \tilde{c}\left(\frac{1}{|\omega_\lambda|{\rm Re}\,\omega_\lambda}\Vert\varphi_\lambda\Vert_{L^2(\mathcal{E})}+\frac{1}{({\rm Re}\, \omega_\lambda)^\frac12}\left\Vert\left[
 	\begin{gathered}
 		q_{-,\lambda}\\
 		q_{+,\lambda}
 	\end{gathered}\right]\right\Vert_{\mathbb{C}^2}\right) \qquad \qquad (\lambda\in \sigma(\mathcal{A})).
 \end{equation*}
 Similarly, we obtain
 \begin{equation*}
 	\left\Vert \frac{{\rm d}q_\lambda}{{\rm d}x} \right\Vert_{L^2(\mathcal{E})}\leq \tilde{c}\left(\frac{1}{{\rm Re}\,\omega_\lambda}\Vert\varphi_\lambda\Vert_{L^2(\mathcal{E})}+\left\Vert\frac{\omega_\lambda}{({\rm Re}\, \omega_\lambda)^\frac12}\left[
 	\begin{gathered}
 		q_{-,\lambda}\\
 		q_{+,\lambda}
 	\end{gathered}\right]\right\Vert_{\mathbb{C}^2}\right),
 \end{equation*}
 for $\lambda\in \Sigma_\theta$, $|\lambda|\geq\frac{4}{\mu(1-\sin\theta)}$. The last estimate and the fact that
 \begin{equation*}
 	\frac{1}{({\rm Re}\, \omega_\lambda)^\frac12}\geq 1.
 \end{equation*}
 imply the conclusion \eqref{dql}.
\end{proof}

We are now in  position to prove that $\mathcal{A}$ is a sectorial operator on $\mathcal{X}$.

\begin{theorem}\label{analyticsemigroup}
	Let $a,\;\mu>0$ and $\theta\in \left[0,\frac{\pi}{2}\right)$, then there exists positive constants $c=c(a,\,\theta,\,\mu)$ and $R(a,\, \mu,\,\theta)$ such that
	\begin{equation}\label{put_together}
		\left\Vert \lambda(\lambda\mathbb{I}-\mathcal{A})^{-1}\right\Vert_{\mathcal{L}(\mathcal{X})}\leq c \qquad\qquad \left(	\lambda \in \Sigma_\theta,\; \vert\lambda\vert\geq R(a,\,\mu,\,\theta)\right),
	\end{equation}
	so that the operator $\mathcal{A}$ generates an analytic semigroup on $\mathcal{X}$.
\end{theorem}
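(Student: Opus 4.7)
The strategy is to feed the explicit resolvent formulas obtained in Theorem \ref{theo 2} into the three technical ingredients already at our disposal (Lemma \ref{le5}, Lemma \ref{le6} and Proposition \ref{rem1}) and read off a bound of the form $\Vert(\lambda\mathbb{I}-\mathcal{A})^{-1}\mathcal{F}\Vert_{\mathcal{X}}\leq c|\lambda|^{-1}\Vert\mathcal{F}\Vert_{\mathcal{X}}$ for $\lambda$ in a sector $\Sigma_\theta$ of sufficiently large modulus. By the classical characterization of sectorial generators, this estimate implies that $\mathcal{A}$ is the generator of an analytic semigroup on $\mathcal{X}$.

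More precisely, given $\mathcal{F}=[f_1\,f_2\,f_3\,f_4\,f_5]^\top\in \mathcal{X}$ with $\|\mathcal{F}\|_{\mathcal{X}}^2=|f_1|^2+\|f_2\|_{H^1(\mathcal{E})}^2+\|f_3\|_{L^2(\mathcal{E})}^2+|f_4|^2+|f_5|^2$, I first estimate the auxiliary function $\varphi_\lambda$. Since $|\lambda|/|1+\mu\lambda|$ and $1/|1+\mu\lambda|$ are uniformly bounded on $\Sigma_\theta$ for $|\lambda|$ large, one gets $\Vert\varphi_\lambda\Vert_{L^2(\mathcal{E})}\leq C\Vert\mathcal{F}\Vert_{\mathcal{X}}$. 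Next, Lemma \ref{le5} provides both $|\omega_\lambda|^2\sim |\lambda|/\mu$ and $\mathrm{Re}\,\omega_\lambda\gtrsim |\lambda|^{1/2}$, so that each scalar $\int_{\mathbb{R}_\pm}\exp(\mp\omega_\lambda\xi)\varphi_\lambda(\xi\pm a)\,\mathrm{d}\xi$ is bounded by $(2\mathrm{Re}\,\omega_\lambda)^{-1/2}\Vert\varphi_\lambda\Vert_{L^2(\mathcal{E})}$. Using Sobolev embedding on $H^1(\mathcal{E})$ to control $f_2(\pm a^{\mp})$ by $\Vert f_2\Vert_{H^1(\mathcal{E})}$, the right-hand side of \eqref{REeq} is bounded in $\mathbb{C}^2$ by $C\Vert\mathcal{F}\Vert_{\mathcal{X}}$, and Lemma \ref{le6} then yields
\begin{equation*}
\left\Vert\begin{bmatrix}q_{-,\lambda}\\ q_{+,\lambda}\end{bmatrix}\right\Vert_{\mathbb{C}^2}
=\left\Vert\lambda\mathcal{M}_\lambda^{-1}\cdot\frac{1}{\lambda}(\text{RHS})\right\Vert_{\mathbb{C}^2}
\leq \frac{C}{|\lambda|}\Vert\mathcal{F}\Vert_{\mathcal{X}}.
\end{equation*}

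Plugging this bound into \eqref{ExpH} gives $|H_\lambda|\leq C|\lambda|^{-1}\Vert\mathcal{F}\Vert_{\mathcal{X}}$. For $q_\lambda$, Proposition \ref{rem1} combined with $|\omega_\lambda|\mathrm{Re}\,\omega_\lambda\gtrsim |\lambda|$ and $\mathrm{Re}\,\omega_\lambda\gtrsim |\lambda|^{1/2}$ gives $\Vert q_\lambda\Vert_{L^2(\mathcal{E})}\leq C|\lambda|^{-1}\Vert\mathcal{F}\Vert_{\mathcal{X}}$, while the estimate \eqref{dql} yields $\Vert q_\lambda'\Vert_{L^2(\mathcal{E})}\leq C|\lambda|^{-1/2}\Vert\mathcal{F}\Vert_{\mathcal{X}}$. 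For the $H^1$-estimate of $h_\lambda$ I use \eqref{Exph}: $\Vert h_\lambda\Vert_{L^2(\mathcal{E})}\leq |\lambda|^{-1}(\Vert f_2\Vert_{L^2}+\Vert q_\lambda'\Vert_{L^2})\lesssim |\lambda|^{-1}\Vert\mathcal{F}\Vert_{\mathcal{X}}$, and for $h_\lambda'=\lambda^{-1}(f_2'-q_\lambda'')$ I exploit the ODE \eqref{Eqq} which gives $q_\lambda''=\omega_\lambda^2 q_\lambda-\varphi_\lambda$; since $|\omega_\lambda|^2\Vert q_\lambda\Vert_{L^2}\lesssim \Vert\mathcal{F}\Vert_{\mathcal{X}}$ (the gain of $|\lambda|^{-1}$ in $q_\lambda$ exactly absorbs the growth of $|\omega_\lambda|^2$), we obtain $\Vert h_\lambda'\Vert_{L^2(\mathcal{E})}\leq C|\lambda|^{-1}\Vert\mathcal{F}\Vert_{\mathcal{X}}$ as well.

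Summing these five component estimates yields \eqref{put_together}; together with the inclusion $\Sigma_\theta\cap\{|\lambda|\geq R(a,\mu,\theta)\}\subset\rho(\mathcal{A})$ (which is part of Theorem \ref{theo 2} combined with Lemmas \ref{le5}, \ref{le6}), this is precisely the sectorial resolvent bound, hence $\mathcal{A}$ generates an analytic semigroup on $\mathcal{X}$. The only delicate point in the whole argument is the bookkeeping on the powers of $|\lambda|$: the estimate in \eqref{dql} is only in $|\lambda|^{-1/2}$, but it is used inside the factor $1/\lambda$ in \eqref{Exph}, which already supplies the missing half power; similarly the ODE \eqref{Eqq} is what prevents the apparently dangerous factor $|\omega_\lambda|^2\sim |\lambda|$ in front of $q_\lambda$ from spoiling the $H^1$-norm of $h_\lambda$.
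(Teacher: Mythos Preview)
Your proof is correct and follows essentially the same route as the paper's: both arguments feed the explicit resolvent formulas of Theorem \ref{theo 2} through Lemmas \ref{le5}--\ref{le6} and Proposition \ref{rem1}, tracking the powers of $|\lambda|$ component by component. Your treatment of $\Vert h_\lambda'\Vert_{L^2}$ via the ODE $q_\lambda''=\omega_\lambda^2 q_\lambda-\varphi_\lambda$ is in fact slightly more explicit than the paper's, which bundles the estimate of $\Vert q_\lambda''\Vert_{L^2}$ into a reference to Proposition \ref{rem1} without spelling out the cancellation between $|\omega_\lambda|^2\sim|\lambda|$ and $\Vert q_\lambda\Vert_{L^2}\lesssim|\lambda|^{-1}$.
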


\begin{proof}
	From Theorem \ref{theo 2}, it follows that if $R(a,\, \mu,\,\theta)$ is large enough then
	\begin{equation}
		\left\{\lambda \in \Sigma_\theta,\; \vert\lambda\vert\geq R(a,\,\mu,\,\theta)\right\}\subset \rho(\mathcal{A}).\notag
	\end{equation}
	Using again Theorem \ref{theo 2}, it follows that for $\lambda$ satisfying \eqref{F6} and every $\mathcal{F}=[f_1 \,\; f_2 \,\; f_3 \,\;f_4 \,\;f_5]^\top \in \mathcal{X}$, we can set
	\begin{equation}
		(\lambda\mathbb{I}-\mathcal{A})^{-1}\mathcal{F}=z_\lambda=\left[H_\lambda \,\;h_\lambda \,\; q_\lambda \,\; q_{-,\lambda} \,\;q_{+,\lambda}\right]^\top,\notag
	\end{equation}
	with $H_\lambda$, $h_\lambda$, $q_\lambda$, $q_{-,\lambda}$ and $q_{+,\lambda}$ satisfying \eqref{REeq}-\eqref{ExpH}. Combining this with \eqref{F7}, we get %sobolev embedding $H^1(a,\; \infty)\rightarrow C^{0,\gamma}(a\;\infty)$
	\begin{align}
		\left\Vert\lambda\left[
		\begin{gathered}
			q_{-,\lambda}\\
			q_{+,\lambda}
		\end{gathered}\right]\right\Vert_{\mathbb{C}^2}\leq& \tilde{c}\left(
		\left\vert\int_{-\infty}^{0}\exp(\omega_\lambda\xi)\varphi_\lambda(\xi-a){\rm d}\xi\right\vert+\left\vert\int_{0}^{\infty}\exp(-\omega_\lambda\xi)\varphi_\lambda(\xi+a){\rm d}\xi\right\vert
		\right)\notag\\
		&+\tilde{c}\left(\vert f_1\vert+\Vert f_2\Vert_{H^1(\mathcal{E})}+\vert f_4\vert +\vert f_5\vert\right) \qquad\qquad \left(\lambda \in \Sigma_\theta,\; \vert\lambda\vert\geq R(a,\,\mu,\,\theta)\right),\notag
	\end{align}
	where $$\varphi_\lambda (x)=\frac{\lambda}{1+\mu\lambda}f_3(x)-\frac{1}{1+\mu\lambda}\frac{{\rm d}f_2(x)}{{\rm d}x}
\qquad\qquad (x\in \mathcal{E}),$$
and  $\tilde{c}=\tilde{c}(\theta,\;\mu\;,a)$.
	 Noting that
	\begin{equation}
		\Vert \varphi_\lambda\Vert_{L^2(\mathcal{E})}\leq \tilde{c}\left(\Vert f_3\Vert_{L^2(\mathcal{E})}+\frac{1}{\vert\lambda\vert}\Vert f_2\Vert_{H^1(\mathcal{E})}\right)\qquad\qquad \left(\lambda \in \Sigma_\theta,\; \vert\lambda\vert\geq R(a,\,\mu,\,\theta)\right),\notag
	\end{equation}
	the above inequalities, combined with H\"older's  inequality, imply that
	\begin{align}
		&\left\Vert\lambda\left[
		\begin{gathered}
			q_{-,\lambda}\\
			q_{+,\lambda}
		\end{gathered}\right]\right\Vert_{\mathbb{C}^2}\notag\\
		\leq&\tilde{c}\left(\vert f_1\vert+\Vert f_2\Vert_{H^1(\mathcal{E})}+\Vert f_3\Vert_{L^2(\mathcal{E})}+\vert f_4\vert +\vert f_5\vert\right)
\qquad \qquad \left(\lambda \in \Sigma_\theta,\; \vert\lambda\vert\geq R(a,\,\mu,\,\theta)\right).\label{1_form}
	\end{align}
	On the other hand, combining Lemma \ref{le3}, Lemma \ref{le5} and \eqref{Expq}, we have
	\begin{align}\label{eastimateofq}
		\Vert \lambda q_\lambda \Vert_{L^2(\mathcal{E})}
		\leq &	\tilde{c}		\left\Vert\lambda\left[
		\begin{gathered}
			q_{-,\lambda}\\
			q_{+,\lambda}
		\end{gathered}\right]\right\Vert_{\mathbb{C}^2} \notag \\
&+\frac{\vert \lambda\vert }{2\vert \omega_\lambda\vert {\rm Re}\, \omega_\lambda}\left(\Vert f_3\Vert_{L^2(\mathcal{E})}+\frac{1}{\vert \lambda \vert}\Vert f_2\Vert_{H^1(\mathcal{E})}\right) \qquad\qquad\left(\lambda \in \Sigma_\theta,\; \vert\lambda\vert\geq R(a,\,\mu,\,\theta)\right).
	\end{align}
	On the other hand, since
	\begin{equation}
		{\rm Re}\,\sqrt{z}=\frac{1}{\sqrt{2}}\sqrt{\vert z\vert + {\rm Re}\,z}\qquad\qquad (z\in\mathbb{C}\verb|\|\mathbb{R}_-),
	\end{equation}
for $\vert\lambda\vert$ large enough, we have
	\begin{equation}
	\frac{\vert \lambda\vert }{2\vert \omega_\lambda\vert {\rm Re}\, \omega_\lambda}\leq c,\label{addestimate}
	\end{equation}
	where $c$ is a constant depending only on $a$, $\mu$ and $\theta$.
	
	Estimates \eqref{eastimateofq} and \eqref{addestimate}, combined with \eqref{F2} and \eqref{F7} yield that
	\begin{align}
			&\Vert \lambda q_\lambda \Vert_{L^2(\mathcal{E})}\notag\\
		\leq&\tilde{c}\left(\vert f_1\vert+\Vert f_2\Vert_{H^1(\mathcal{E})}+\Vert f_3\Vert_{L^2(\mathcal{E})}+\vert f_4\vert +\vert f_5\vert\right) \qquad\qquad \left(\lambda \in \Sigma_\theta,\; \vert\lambda\vert\geq R(a,\,\mu,\,\theta)\right).\label{2_form}
	\end{align}
	 Using Theorem \ref{theo 2} and Remark \ref{rem1}, we derive:
	 \begin{align}
	 	&\left\Vert\lambda h_\lambda \right\Vert_{H^1(\mathcal{E})}\notag\\
	 	=&\left\Vert f_2-\frac{{\rm d}q_\lambda}{{\rm d}x}\right\Vert_{H^1(\mathcal{E})}\notag\\
	 	\leq & \Vert f_2\Vert_{H^1(\mathcal{E})}+\left\Vert \frac{{\rm d}q_\lambda}{{\rm d}x}\right\Vert_{L^2(\mathcal{E})}+\left\Vert \frac{{\rm d}^2q_\lambda}{{\rm d}x^2}\right\Vert_{L^2(\mathcal{E})}\notag\\
	 	\leq&\tilde{c}\left(\vert f_1\vert+\Vert f_2\Vert_{H^1(\mathcal{E})}+\Vert f_3\Vert_{L^2(\mathcal{E})}+\vert f_4\vert +\vert f_5\vert\right) \qquad\qquad \left(\lambda \in \Sigma_\theta,\; \vert\lambda\vert\geq R(a,\,\mu,\,\theta)\right), \label{3_form}
	 \end{align}
	and
	\begin{align}
		&\Vert\lambda H_\lambda\Vert_\mathbb{C}\notag\\
		=&\left\Vert f_1-\frac{q_{+,\lambda}-q_{-,\lambda}}{2a}\right\Vert_\mathbb{C}\notag\\
		\leq&\tilde{c}\left(\vert f_1\vert+\Vert f_2\Vert_{H^1(\mathcal{E})}+\Vert f_3\Vert_{L^2(\mathcal{E})}+\vert f_4\vert +\vert f_5\vert\right) \qquad\qquad \left(\lambda \in \Sigma_\theta,\; \vert\lambda\vert\geq R(a,\,\mu,\,\theta)\right).\label{4_form}
	\end{align}
Putting together \eqref{1_form}, \eqref{2_form}, \eqref{3_form} and \eqref{4_form} yield the conclusion \eqref{put_together}.
\end{proof}

We are now in a position to prove our first main result.
\begin{proof}[Proof of Theorem \ref{th_exist_1}]
The proof is based on Theorem \ref{analyticsemigroup} and on an interpolation argument. Following
Lunardi \cite[Chapter 1]{lunardi2018interpolation}, given two Banach spaces $X,\ Y$, where $Y\subset X$
with continuous embedding, we denote by $(X,\,Y)_{\frac12,2}$ the real interpolation space of order $\frac12$
between $Y$ and $X$.

Recall the operators  $\mathcal{A}$ and $\mathcal{B}$ defined in
	\eqref{defineDA} and \eqref{A12}. It is not difficult to check that the real interpolation space
$\left(\mathcal{X},\, \mathcal{D}(\mathcal{A})\right)_{\frac{1}{2},2}$ coincides with the space $\mathcal{W}$ defined in
\eqref{defineW}. Moreover, we know from Theorem \ref{analyticsemigroup}
that $\mathcal{A}$ generates an analytic semigroup on $\mathcal{X}$ and we also know that $\mathcal{B}\in \mathcal{L}(\mathcal{U},\mathcal{X})$.
According to a classical result on analytic semigroups (see, for instance, \cite[Theorem 2.1, p.207]{p2}), for every $z_0\in \left(\mathcal{X}, \mathcal{D}(\mathcal{A})\right)_{\frac{1}{2},2}=\mathcal{W}$, $T>0$ and $\mathcal{B}u\in L^2((0,T);\; \mathcal{X})$, the system
\eqref{abstract_form}, \eqref{abstract_form_2} admits a unique solution
$$
z\in C([0,T];\;\mathcal{W})\cap L^2((0,T);\;\mathcal{Z})\cap H^1((0,T);\;\mathcal{X}).
$$
The desired conclusion follows now by applying Proposition \ref{echivalenta_mare}.
\end{proof}

\section{The optimal control problem}\label{section4}

We give below the proof of our main result in Theorem \ref{thA1} and we describe the construction of the feedback control by applying
Proposition \ref{mainusedtheorem}. To apply this result we have to prove the existence of a feedback operator $F$
and of an observation operator $C$
such that our system satisfies all the assumptions of Proposition \ref{mainusedtheorem}. The definition of $C$
is easy to find, whereas
the construction of $F$ is slightly more involved. Therefore, in the first subsection below we
derive some energy estimates which will suggest the choice of this feedback operator.

\subsection{Feedback design}

We begin by deriving the following energy estimate:

\begin{proposition}\label{leA1} Let $\left[\begin{gathered}
 	H\
 	h\
 	q\
 	p
 \end{gathered}\right]^\top $ be the solution of system \eqref{A1}-\eqref{A_Last} whose existence and uniqueness has been proved in Theorem \ref{th_exist_1} and let
	\begin{gather}
		E(t)=\int_{\mathbb{R}} \frac{1}{2}\left[q^2(t,x)+h^2(t,x)\right]{\rm d}x+\frac{1}{2}\dot{H}^2(t) \qquad\qquad (t> 0).\label{B8}
	\end{gather}
	 Then we have
	\begin{equation}
		\frac{\rm d}{{\rm d} t}E(t)=-\mu \int_{\mathbb{R}} \left (\frac{\partial q}{\partial x}\right )^2(t,x) {\rm d}x+u(t)\dot{H} (t) \qquad\qquad (t> 0). \label{EEst}
	\end{equation}
	
\end{proposition}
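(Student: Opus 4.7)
The plan is a direct energy computation: differentiate $E(t)$ under the integral sign, use the governing equations of the system to eliminate the time derivatives in favour of spatial derivatives (and the input $u$), then perform careful integration by parts on $\mathcal{I}$ and $\mathcal{E}$ separately, using the transmission and jump conditions to match boundary contributions. The regularity provided by Theorem \ref{th_exist_1} (in particular $q\in L^2((0,T);H^2(\mathcal{E}))\cap H^1((0,T);L^2(\mathbb{R}))$, $h\in H^1((0,T);H^1(\mathcal{E}))$, $H\in H^2(0,T)$, and $q$ polynomial of degree $\le 1$ on $\mathcal{I}$) makes every step below rigorous.

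First I would compute
$$\frac{{\rm d}}{{\rm d}t}E(t)=\int_{\mathbb{R}} q\,\partial_t q\,{\rm d}x+\int_{\mathbb{R}} h\,\partial_t h\,{\rm d}x+\dot H\ddot H.$$
Using \eqref{A1} on all of $\mathbb{R}$ gives $\int_{\mathbb{R}} h\,\partial_t h=-\int_{\mathbb{R}} h\,\partial_x q$. Splitting the $q$-integral at $\pm a$, I would use \eqref{conversation_of_monmentun} on $\mathcal{E}$ and \eqref{theconnectionofHp} on $\mathcal{I}$, yielding
$$\int_{\mathcal{E}} q\,\partial_t q=-\int_{\mathcal{E}} q\,\partial_x h+\mu\int_{\mathcal{E}} q\,\partial_x^2 q,\qquad \int_{\mathcal{I}} q\,\partial_t q=-\int_{\mathcal{I}} q\,\partial_x p.$$

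Next I would integrate by parts. On $\mathcal{E}$, combining $-\int_{\mathbb{R}} h\,\partial_x q$ with $-\int_{\mathcal{E}} q\,\partial_x h$ and using the decay of $q$ at infinity together with the continuity $q(\pm a^\pm)=q(\pm a)$ eliminates the bulk integrals on $\mathcal{E}$ and leaves the boundary terms $q(a)[h(a^+)-H]-q(-a)[h(-a^-)-H]$ (the piece $-\int_{\mathcal{I}} h\,\partial_x q=-H(q(a)-q(-a))$ is the source of the $H$ contributions, since $h\equiv H$ on $\mathcal{I}$ by \eqref{AHh}). Integrating $\mu\int_{\mathcal{E}} q\,\partial_x^2 q$ by parts and grouping with these boundary terms I get
$$q(a)\bigl[h(a^+)-H-\mu\partial_x q(a^+)\bigr]-q(-a)\bigl[h(-a^-)-H-\mu\partial_x q(-a^-)\bigr]-\mu\int_{\mathcal{E}}(\partial_x q)^2{\rm d}x.$$
The jump conditions \eqref{jump_condition_1}, \eqref{jump_condition_2} replace the bracketed quantities by $p(\mp a^\pm)-\mu\partial_x q(\mp a^\pm)$.

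Then I would integrate $-\int_{\mathcal{I}} q\,\partial_x p$ by parts to produce $-q(a)p(a^-)+q(-a)p(-a^+)+\int_{\mathcal{I}} p\,\partial_x q$. The pressure boundary terms cancel exactly with those generated in the previous step, leaving only
$$-\mu q(a)\partial_x q(a^-)+\mu q(-a)\partial_x q(-a^+)+\int_{\mathcal{I}} p\,\partial_x q-\mu\int_{\mathcal{E}}(\partial_x q)^2.$$
On $\mathcal{I}$, combining \eqref{A1} with \eqref{AHh} forces $\partial_x q(t,x)=-\dot H(t)$ for $x\in\mathcal{I}$, so the first two terms become $\mu\dot H(q(a)-q(-a))=-2a\mu\dot H^2=-\mu\int_{\mathcal{I}}(\partial_x q)^2$, which merges with $-\mu\int_{\mathcal{E}}(\partial_x q)^2$ into the desired $-\mu\int_{\mathbb{R}}(\partial_x q)^2$.

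Finally, the same identity $\partial_x q|_\mathcal{I}=-\dot H$ gives $\int_{\mathcal{I}} p\,\partial_x q=-\dot H\int_{-a}^{a} p\,{\rm d}x$, and \eqref{newton_second_law} yields $\int_{-a}^a p\,{\rm d}x=\ddot H-u$, so this contribution is $-\dot H\ddot H+u\dot H$, which combined with the original $\dot H\ddot H$ from the energy derivative leaves only $u\dot H$. The only point requiring genuine care is bookkeeping of signs and the systematic use of the jump and transmission conditions at $x=\pm a$; once these are handled, \eqref{EEst} drops out.
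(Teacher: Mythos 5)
Your proposal is correct and follows essentially the same route as the paper's proof: differentiate $E$, substitute the mass and momentum equations on $\mathcal{E}$ and the pressure equation \eqref{theconnectionofHp} on $\mathcal{I}$, integrate by parts using the decay at infinity and the jump conditions \eqref{jump_condition_1}--\eqref{jump_condition_2}, and conclude with Newton's law \eqref{newton_second_law}. Your write-up merely makes explicit some boundary-term bookkeeping (in particular the identity $\partial_x q=-\dot H$ on $\mathcal{I}$) that the paper leaves implicit, so no further changes are needed.
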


\begin{proof}
	From \eqref{B8}, it follows that
	\begin{gather}
		\frac{\rm d}{{\rm d} t}E(t)=\int_{\mathbb{R}}\left[q(t,x)\frac{\partial q}{\partial t}(t,x) + h(t,x) \frac{\partial h}{\partial t}(t,x)\right] {\rm d}x + \dot{H}(t)\ddot{H}(t)\qquad\qquad (t>0\ a.e.). \nonumber
	\end{gather}
	 Combining the above formula with \eqref{A1}, \eqref{conversation_of_monmentun}, \eqref{AHh} and \eqref{p=0}, we obtain
	\begin{align}
		\frac{\rm d}{{\rm d} t}E(t)
		=&-\int_{\mathcal{E}}\frac{\partial(hq)}{\partial x}(t,x) {\rm d}x+\mu \int_{\mathcal{E}} q(t,x)\frac{\partial^2 q}{\partial x^2}(t,x) {\rm d}x\notag\\
		 &+\mu \int_{\mathcal{I}} q(t,x)\frac{\partial^2 q}{\partial x^2}(t,x) {\rm d}x - H(t)\int_{\mathcal{I}}\frac{\partial q}{\partial x}(t,x){\rm d}x\notag\\
		 &-\int_{\mathcal{I}}q(t,x)\frac{\partial p}{\partial x}(t,x){\rm d}x+\dot{H}(t)\ddot{H}(t) \qquad\qquad\qquad \quad\qquad (t>0 \ a.e.).\notag
	\end{align}
	Integrating by parts and combining  \eqref{jump_condition_1}, \eqref{jump_condition_2}, \eqref{theconnectionofHp} and  \eqref{A2}, we obtain
	\begin{equation}
		\frac{\rm d}{{\rm d} t}E(t)=-\mu \int_{\mathbb{R}} \left (\frac{\partial q}{\partial x}\right )^2(t,x) {\rm d}x-\dot{H}(t)\int_{\mathcal{I}}p(t,x){\rm d}x+\dot{H}(t)\ddot{H}(t) \qquad\qquad (t>0\ a.e.).\notag
	\end{equation}
	Using \eqref{newton_second_law} and above equality, it follows that
	\begin{equation}
		\frac{\rm d}{\rm dt}E(t)=-\mu \int_{\mathbb{R}} \left (\frac{\partial q}{\partial x}\right )^2(t,x) {\rm d}x+u(t)\dot{H}(t) \qquad\qquad (t> 0),\notag
	\end{equation}
which concludes the proof of this lemma.
\end{proof}

The above lemma suggests that $\tilde u_\alpha(t)=-\alpha \dot H(t)$, with $\alpha\geqslant 0$, is a feedback control for which
the energy of the system is not increasing. Moreover, if $\alpha>0$ it follows from \eqref{EEst} that $J(\tilde u_\alpha)$ is finite.
In order to express this feedback law in terms of $z$ introduced in Section \ref{section2}, we note that from \eqref{A3} this feedback
can be rewritten
$$
\tilde{u}_\alpha(t)=\alpha\left(\frac{q_+(t) - q_-(t)}{2a}\right)\qquad \qquad (t>0,\; \alpha\geq 0).
$$
In the next subsection we show  how the above formula allows us to construct a feedback operator $F$ and operators $A$, $B$ and $C$
such that all the assumptions in Proposition \ref{mainusedtheorem} are satisfied.

\subsection{Proof of Theorem \ref{thA1} }

In this subsection we show that equations \eqref{A1}-\eqref{A2}, together with the output law $y(t)=\dot H(t)$ for $t>0$ define a system satisfying all the assumptions in Proposition \ref{mainusedtheorem}. To this aim, we define the following spaces and operators:
\begin{itemize}
\item
The state space is $X=\mathcal{W}$, where $\mathcal{W}$ has been introduced in \eqref{defineW};
\item
$A$ is the part in $\mathcal{W}$ of the operator $\mathcal{A}$ introduced in \eqref{defineDA}-\eqref{addes_label};
\item
$Y=\mathbb{C}$ and $C\in \mathcal{L}(X,Y)$ is defined by
\begin{equation}\label{def_C_bun}
Cz=	-\frac{q_+-q_-}{2a} \qquad\qquad \left(z=[H \,\;h \,\; q \,\;q_- \,\; q_+]^\top \in X\right).
\end{equation}
\item
$U=\mathbb{C}$ and $B=\mathcal{B}\in \mathcal{L}(U, X)$, where $\mathcal{B}$ has been defined in \eqref{A12};
\item $F\in \mathcal{L}(X,U)$ is defined by
\begin{equation}\label{DEF}
F z=\frac{q_+-q_-}{2a} \qquad\qquad(z\in X).
\end{equation}
\end{itemize}

\begin{remark}\label{the_equal_system}
	The motivation behind of the above choice of the feedback operator has been explained above, in the comments following the proof of  Proposition \ref{leA1}.
\end{remark}

\begin{proposition}\label{anal_perturb}
	With the above notation, the operators $A$ and $A_F=A+BF$ generate analytic semigroups on $X$.
\end{proposition}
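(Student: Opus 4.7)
The plan is to prove the two generation statements in turn. For $A$, the strategy is to invoke interpolation: I would use the identification $\mathcal{W}=\bigl(\mathcal{X},\mathcal{D}(\mathcal{A})\bigr)_{\frac12,2}$ already established in the proof of Theorem \ref{th_exist_1}, together with the fact (Theorem \ref{analyticsemigroup}) that $\mathcal{A}$ generates an analytic semigroup on $\mathcal{X}$. A classical result of analytic semigroup theory (see, e.g., Lunardi \cite[Chapter 2]{lunardi2018interpolation}) then implies that the part of such a generator in any real interpolation space $(\mathcal{X},\mathcal{D}(\mathcal{A}))_{\alpha,p}$ with $\alpha\in(0,1)$ is again the generator of an analytic semigroup on that space; specializing to $\alpha=\frac12$, $p=2$ gives the desired conclusion for $A$, which is by definition the part of $\mathcal{A}$ in $\mathcal{W}$.

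For $A_F=A+BF$, the plan is to treat $BF$ as a bounded perturbation of $A$ on $\mathcal{W}$. Observe that $F\in\mathcal{L}(\mathcal{W},U)$ is immediate from \eqref{DEF}, since $Fz=(q_+-q_-)/(2a)$ depends continuously only on the scalar components $q_\pm$ of $z\in\mathcal{W}$, while $B=\mathcal{B}\in\mathcal{L}(U,\mathcal{W})$ is built into the setup. Consequently $BF$ is a bounded linear operator on $\mathcal{W}$, and the classical bounded perturbation theorem for analytic semigroup generators (see, e.g., \cite{p2}) yields at once that $A+BF$ generates an analytic semigroup on $\mathcal{W}$.

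The main technical content is concentrated in the first step, namely the interpolation-based transfer of the semigroup generation from the ambient space $\mathcal{X}$ to the interpolation space $\mathcal{W}$; everything else is a routine bookkeeping of boundedness and a citation of perturbation theory. In particular, no fresh resolvent estimate needs to be carried out: the estimate \eqref{put_together} of Theorem \ref{analyticsemigroup} already provides the required sectorial behaviour, and the abstract interpolation result packages the passage to $\mathcal{W}$ into a single application.
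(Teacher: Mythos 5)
Your proposal follows essentially the same route as the paper: you transfer analyticity from $\mathcal{X}$ to $X=\mathcal{W}=\bigl(\mathcal{X},\mathcal{D}(\mathcal{A})\bigr)_{\frac12,2}$ via the classical result that the part of a sectorial operator in a real interpolation space is again sectorial (the paper cites \cite[Theorem 6.5, p.165]{lunardi2018interpolation}), and then handle $A_F=A+BF$ by the bounded perturbation theorem (the paper cites \cite[Corollary 2.2, p.81]{p23}). The argument is correct and matches the paper's proof in both structure and substance.
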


\begin{proof}
As already mentioned in the Proof of Theorem \ref{th_exist_1}, $X=\mathcal{W}$ coincides with the real interpolation
space  $\left(\mathcal{X}, \mathcal{D}(\mathcal{A})\right)_{\frac{1}{2},2}$, where $\mathcal{A}$
has been introduced in \eqref{defineDA}-\eqref{addes_label}. On the other hand, we have seen in Theorem \ref{analyticsemigroup}
that $\mathcal{A}$ is a sectorial operator on the space $\mathcal{X}$ defined in \eqref{defineX}.
The two above facts, combined with \cite[Theorem 6.5, p.165]{lunardi2018interpolation} imply that $A$ is the part of $\mathcal{A}$ in $X$ and $A$ is sectorial on $X$, so that $A$ generates an analytic semigroup on $X$. This, the fact that $BF\in \mathcal{L}(X)$ and Corollary 2.2
in Pazy \cite[p.81]{p23} imply that $A_F$ generates an analytic semigroup on $X$.
\end{proof}

The result below provides some useful spectral properties of the operator $A_F$ introduced above.

\begin{proposition}\label{thD1}
		With the notation in Proposition \ref{anal_perturb}, there exists a set $\mathcal{S}\subset \mathbb{C}_-$ (recall \eqref{defineCnegetive} for the definition of $\mathbb{C}_-$), having at most four elements, such that the spectrum of  $A_F$ satisfies
	\begin{equation}\label{sperTA}
		\sigma\left(A_F\right)\subset\{0\}\cup \mathcal{S}\cup \left(-\infty, -\frac{1}{\mu}\right]\cup \left\{\lambda \in \mathbb{C}_-\quad \left\vert \quad  \left\vert \lambda +\frac{1}{\mu}\right \vert =\frac{1}{\mu}\right\}\right..
	\end{equation}
\end{proposition}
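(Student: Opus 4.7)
The plan is to mimic the resolvent analysis of Theorem \ref{theo 2} for the closed-loop operator $A_F$, exploiting the fact that $BF$ is a bounded, rank-one perturbation that only modifies the last two coordinates of the state. First, I would extend $A_F$ to the larger space $\mathcal{X}$ by setting $\mathcal{A}_F:=\mathcal{A}+BF$ (which is well defined since $BF$ is bounded on $\mathcal{X}$) and argue, exactly as in the proof of Proposition \ref{anal_perturb}, that $A_F$ is the part of $\mathcal{A}_F$ in $\mathcal{W}=X$. For the analytic generators involved, this identification implies $\sigma(A_F)\subset\sigma(\mathcal{A}_F)$, so it suffices to establish the spectral inclusion for $\mathcal{A}_F$ on $\mathcal{X}$.

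Next, I would redo the steps of the proof of Theorem \ref{theo 2} for $\mathcal{A}_F$. Because $BF$ affects only the fourth and fifth components of the state (through $Fz=(q_+-q_-)/(2a)$ being plugged into the input channel of $\mathcal{B}$), equations \eqref{R1}--\eqref{R4} are unchanged and Lemma \ref{le4} still expresses $q_\lambda$ and $\tfrac{dq_\lambda}{dx}(\pm a^\mp)$ in terms of $q_{\pm,\lambda}$ and $\varphi_\lambda$; only \eqref{R5} is modified by a $\lambda$-independent rank-one term
\begin{equation*}
\lambda\begin{bmatrix}q_{-,\lambda}\\ q_{+,\lambda}\end{bmatrix}-\frac{1}{2\left(1+\tfrac{2a^3}{3}\right)}\begin{bmatrix}q_{+,\lambda}-q_{-,\lambda}\\ -(q_{+,\lambda}-q_{-,\lambda})\end{bmatrix}-4a^2 M[\cdots]=\begin{bmatrix}f_4\\ f_5\end{bmatrix}.
\end{equation*}
Multiplying by $M^{-1}$ and repeating the algebra of Theorem \ref{theo 2}, one obtains an algebraic system $\tilde{\mathcal{M}}_\lambda[q_{-,\lambda}\ q_{+,\lambda}]^\top=\mathrm{RHS}$ in which $\tilde{\mathcal{M}}_\lambda$ differs from $\mathcal{M}_\lambda$ of \eqref{DMLAMBDA} only by a constant $2\times 2$ matrix. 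Consequently, $\lambda\in\rho(\mathcal{A}_F)$ whenever $\lambda$ is outside the exceptional locus \eqref{C1} and $\det\tilde{\mathcal{M}}_\lambda\neq 0$, and the resolvent is then given by formulas analogous to \eqref{REeq}--\eqref{ExpH}.

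The core step is an analogue of Lemma \ref{le2} for $\tilde{\mathcal{M}}_\lambda$: after multiplying $\det\tilde{\mathcal{M}}_\lambda$ by $\omega_\lambda\lambda^k$ and using $\omega_\lambda^2=\lambda^2/(1+\mu\lambda)$ to clear $\omega_\lambda$ by squaring, the equation $\det\tilde{\mathcal{M}}_\lambda=0$ reduces to a polynomial equation in $\lambda$ whose degree is the same as in the proof of Lemma \ref{le2} (the constant perturbation does not raise it). Hence it has at most four roots off the exceptional locus, giving the finite set $\mathcal{S}$.

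Finally, to show $\mathcal{S}\subset\mathbb{C}_-$, I would argue by contradiction using dissipativity. With the feedback choice made here, $u(t)=Fz(t)=-\dot H(t)$ (since $-\dot H=(q_+-q_-)/(2a)$ by \eqref{A3}), so Proposition \ref{leA1} gives
\begin{equation*}
\frac{d}{dt}E(t)=-\mu\int_{\mathbb{R}}\Bigl(\tfrac{\partial q}{\partial x}\Bigr)^2\!(t,x)\,{\rm d}x-|\dot H(t)|^2\leq 0.
\end{equation*}
If $\lambda$ were an eigenvalue of $A_F$ with $\mathrm{Re}\,\lambda>0$, the associated eigenvector would, via Proposition \ref{echivalenta_mare}, generate a classical solution of the closed-loop system growing like $e^{(\mathrm{Re}\,\lambda)t}$, contradicting the energy inequality. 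The inclusion $\{0\}\subset\sigma(A_F)$ is retained since $\lambda=0$ is always an exceptional point of the resolvent formula (the identity $\omega_\lambda=\lambda/\sqrt{1+\mu\lambda}$ degenerates there). The most delicate point is the last one: one must verify that the energy norm controls the $X=\mathcal{W}$-norm of the eigenvector along $(-\infty,-1/\mu]\cup\{|\lambda+1/\mu|=1/\mu\}^c$, which follows because the eigenvector, being smooth and decaying at infinity by the explicit representation \eqref{Expq}, has finite energy.
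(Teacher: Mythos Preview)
Your approach coincides with the paper's: the paper simply says the proof follows Theorem \ref{theo 2} line by line, replacing $\mathcal{M}_\lambda$ by the explicit matrix $\tilde{\mathcal{M}}_\lambda$ (which is indeed $\mathcal{M}_\lambda+I$, the extra $+1$ on each diagonal entry coming from the feedback term exactly as you describe). The only place you diverge is in arguing that $\mathcal{S}\subset\mathbb{C}_-$. The paper's implicit route is the direct computation from the proof of Lemma \ref{le2}, which carries over almost verbatim here since for $\mathrm{Re}\,\lambda\geq 0$, $\lambda\neq 0$, both factors of $\det\tilde{\mathcal{M}}_\lambda$ have strictly positive real part; you instead appeal to the energy identity of Proposition \ref{leA1}. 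As written, your energy argument only excludes eigenvalues with $\mathrm{Re}\,\lambda>0$, so you would still owe a short supplementary step to rule out nonzero purely imaginary $\lambda$ (namely, that $\dot E\equiv 0$ forces $\partial_x q\equiv 0$ and $\dot H\equiv 0$, whence the eigenvector vanishes). The direct computation handles $\mathrm{Re}\,\lambda\geq 0$ uniformly and avoids the detour through Proposition \ref{echivalenta_mare}.
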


\begin{proof}
	The proof can be completed following line by line the Theorem \ref{theo 2}. The only difference is that we  need to replace the matrix $\mathcal{M}_\lambda$ defined in \eqref{DMLAMBDA}  by
	\begin{align}
		\tilde{\mathcal{M}}_\lambda= \begin{bmatrix}
			\lambda\left(1+\frac{8a^3}{3}\right)+2a\left(\mu+\frac{1}{2a} +\frac{1}{\lambda}\right)+\frac{4a^2\lambda}{\omega_\lambda}&\ & -\lambda \left(1-\frac{4a^3}{3}\right)-2a\left(\mu +\frac{1}{\lambda}\right)\\
			-\lambda \left(1-\frac{4a^3}{3}\right)-2a\left(\mu +\frac{1}{\lambda}\right) &\ & \lambda\left(1+\frac{8a^3}{3}\right)+2a\left(\mu+\frac{1}{2a} +\frac{1}{\lambda}\right)+\frac{4a^2\lambda}{\omega_\lambda}
		\end{bmatrix},		\notag
	\end{align}
where $\omega_\lambda$ is defined in \eqref{DEOMGA}.
\end{proof}

We also need the result below, which is easy to check, so we skip the proof.

 \begin{proposition}\label{the_point_spectrum_AF}
With the  notation in Proposition \ref{anal_perturb}, we have that $0$ is not an eigenvalue of $A_F$. Moreover, for every $G\in \mathcal{L}(X, \mathbb{C})$, we have that $0\in \sigma(A+BG)$, which implies that system $\Sigma(A,B,C)$ is not exponentially stabilizable.
 \end{proposition}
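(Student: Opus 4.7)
The plan is to analyze both claims by component-wise reduction in the state space $X=\mathcal{W}$, exploiting the structure of $\mathcal{A}$ from \eqref{defineDA}--\eqref{addes_label} and the fact that $\mathcal{B}$ maps into the last two coordinates only.

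For the first claim, I would take $z=[H\ h\ q\ q_-\ q_+]^\top\in\mathcal{D}(A_F)$ with $A_Fz=0$ and read off the five scalar/functional equations. Row~$1$ gives $-(q_+-q_-)/(2a)=0$, so $q_+=q_-$, which makes the contribution of $BFz$ to rows~$4$ and~$5$ vanish identically. Row~$2$ forces $q'\equiv 0$ on $\mathcal{E}$, and then row~$3$ reduces to $h'\equiv 0$ on $\mathcal{E}$. Since $q\in H^2(\mathcal{E})\subset L^2(\mathcal{E})$ is piecewise constant on the two unbounded components of $\mathcal{E}$, the $L^2$ requirement forces $q\equiv 0$; the same reasoning applied to $h\in H^1(\mathcal{E})$ gives $h\equiv 0$; and from the matching conditions in \eqref{defineZ} we deduce $q_\pm=q(\pm a)=0$. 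Substituting $q\equiv h\equiv 0$ and $q_\pm=0$ into \eqref{addes_label} collapses rows~$4$ and~$5$ of $A_Fz=0$ to $4a^2 M[-H\ H]^\top=0$. Since $M$ defined in \eqref{AM} is invertible (a direct computation gives $\det M=[8a^3(1+2a^3/3)]^{-1}\neq 0$), we conclude $H=0$, hence $z=0$, so $0$ is not an eigenvalue of $A_F$.

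For the second claim, the key observation is that for \emph{every} $G\in\mathcal{L}(X,\mathbb{C})$ the first three components of $(A+BG)z$ coincide with those of $\mathcal{A}z$, because $B$ has zero entries in the first three slots. I would show that $A+BG$ is not surjective by exhibiting $f:=[1\ 0\ 0\ 0\ 0]^\top\in\mathcal{W}$ outside its range: a putative $z$ with $(A+BG)z=f$ would satisfy $q'\equiv 0$ and $\mu q''-h'\equiv 0$ on $\mathcal{E}$; by the same piecewise-constant $L^2$ argument as above this forces $q\equiv 0$, $h\equiv 0$, and in particular $q_\pm=q(\pm a)=0$; but then row~$1$ would read $-(q_+-q_-)/(2a)=0\neq 1$, a contradiction. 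Hence $0\in\sigma(A+BG)$ for every $G\in\mathcal{L}(X,\mathbb{C})$.

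The non-exponential-stabilizability assertion then follows by a standard semigroup argument: if some $G$ made $A+BG$ the generator of an exponentially stable semigroup, its spectrum would have to lie in a half-plane $\{\operatorname{Re}\lambda\leq -\omega\}$ with $\omega>0$, and in particular would not contain $0$, contradicting the previous paragraph. The only genuinely non-routine step in the whole argument is the $L^2$ dichotomy on the unbounded components of $\mathcal{E}$---a function constant on a half-line belongs to $L^2$ only if it vanishes---which is precisely where the unboundedness of the fluid domain plays its role.
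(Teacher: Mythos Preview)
Your argument is correct and is precisely the elementary verification the authors have in mind; the paper itself omits the proof entirely, stating only that the result ``is easy to check.'' There is therefore nothing to compare against---your component-wise reduction, the $L^{2}$ dichotomy on the unbounded half-lines, and the invertibility of $M$ are exactly the ingredients one needs, and the non-surjectivity test with $f=[1\ 0\ 0\ 0\ 0]^\top$ is the natural obstruction.
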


 %\begin{proof}
% First Result: By calculating $A_F\tilde{z}=0$ and considering the fact that $H^1(\mathbb{R})=H^1_0(\mathbb{R})$, we deduce:
% \begin{equation*}
% 	\tilde{z}=0.
% \end{equation*}
% This implies that $0\notin \sigma_p\left(A^F\right)$.
%
% Second Result: Considering $B=\mathcal{B}$ as defined in \eqref{A12}, for every $G\in \mathcal{L}(X,\mathbb{C})$, the resolvent equations of $A+BG$ will involve equations \eqref{R1}-\eqref{R4}. Without encountering any difficulties, it can be checked that $0\notin \rho(A+BG)$, which means that $0\in \sigma(A+BG)$.
%
% Thus, we conclude this proof.
 %\end{proof}

The result below show that the system $\Sigma\left(A+BF,\,0,\, \begin{bmatrix} C\\F\end{bmatrix}\right)$ satisfies one of the main assumptions in Proposition \ref{mainusedtheorem}.

\begin{proposition}\label{strongly_stable}
	With the notation in Proposition \ref{anal_perturb}, the semigroup $\mathbb{T}^F$ generated by $A_F$ is strongly stable in $X$.
\end{proposition}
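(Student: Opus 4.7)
The plan is to apply the strong stability theorem of Arendt, Batty, Lyubich and V\~{u}, which asserts that a bounded $C_0$ semigroup on a reflexive Banach space whose generator has countable spectrum on $i\mathbb{R}$ and no purely imaginary eigenvalue of the adjoint is strongly stable. Since $X = \mathcal{W}$ is a Hilbert space, hence reflexive, I need only check three ingredients: uniform boundedness of $\mathbb{T}^F$ on $X$, countability of $\sigma(A_F) \cap i\mathbb{R}$, and emptiness of $\sigma_p(A_F^*) \cap i\mathbb{R}$.

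\emph{Uniform boundedness.} The feedback $F$ was designed precisely so that $u(t) = Fz(t) = -\dot H(t)$, see \eqref{A3} and \eqref{DEF}. Inserting this choice into the energy identity of Proposition \ref{leA1} yields
\begin{equation*}
E(t) + \int_0^t\!\Bigl(\mu\|\partial_x q(s,\cdot)\|_{L^2(\mathbb{R})}^2 + |\dot H(s)|^2\Bigr)\,ds = E(0),
\end{equation*}
which gives a uniform bound on $\mathbb{T}^F(t)z_0$ in the energy (essentially $\mathcal{X}$-level) norm. To upgrade to boundedness in $\mathcal{W}$, I would apply the same identity to the time derivative $\dot z = A_F z$, which is itself a closed-loop trajectory of the analytic semigroup; this yields $\|A_F\mathbb{T}^F(t)z_0\|_{\mathcal{X}} \leq C\|A_F z_0\|_{\mathcal{X}}$ for $z_0 \in \mathcal{D}(A_F)$. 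Since $\mathcal{W}=(\mathcal{X},\mathcal{D}(\mathcal{A}))_{1/2,2}$, as used in the proof of Theorem \ref{th_exist_1}, and $BF$ is a bounded perturbation not altering the interpolation scale, real interpolation produces the desired bound $\|\mathbb{T}^F(t)\|_{\mathcal{L}(\mathcal{W})} \leq M$.

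\emph{Spectral conditions.} Proposition \ref{thD1} gives $\sigma(A_F)\cap i\mathbb{R} \subset \{0\}$, certainly countable, and Proposition \ref{the_point_spectrum_AF} gives $0 \notin \sigma_p(A_F)$. It remains to check $0 \notin \sigma_p(A_F^*)$: I would compute $A_F^*$ by integration by parts on $\mathcal{E}$, along the lines of the derivation of $\mathcal{A}$ in Section \ref{section2}, and observe that $A_F^*\phi = 0$ reduces to a homogeneous elliptic boundary-value problem on $\mathcal{E}$ augmented by algebraic matching conditions at $x=\pm a$; unique solvability of the same kind as in Lemma \ref{le4}, together with the decay at infinity forced by $\phi \in L^2$, then yields $\phi\equiv 0$.

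Combining these three ingredients, Arendt-Batty-Lyubich-V\~{u} delivers $\|\mathbb{T}^F(t)z\|_X \to 0$ as $t \to \infty$ for every $z \in X$, which is the strong stability claimed. The main obstacle is the boundedness step: the dissipation identity of Proposition \ref{leA1} only controls the $\mathcal{X}$-norm, whereas $X = \mathcal{W}$ is strictly stronger, and upgrading to $\mathcal{W}$-boundedness depends crucially on the analyticity of $\mathbb{T}^F$ established in Proposition \ref{anal_perturb}, exploited here through the interpolation characterization of $\mathcal{W}$.
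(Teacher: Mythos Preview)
Your overall strategy and the spectral ingredients match the paper exactly: both reduce to the Arendt--Batty theorem, invoking Proposition~\ref{thD1} for $\sigma(A_F)\cap i\mathbb{R}\subset\{0\}$ and Proposition~\ref{the_point_spectrum_AF} for $0\notin\sigma_p(A_F)$. Since $X$ is Hilbert, the version of Arendt--Batty requiring only $\sigma_p(A_F)\cap i\mathbb{R}=\varnothing$ already suffices, so your separate verification that $0\notin\sigma_p(A_F^*)$ is superfluous.

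The substantive divergence, and where your argument has a genuine gap, is the uniform boundedness of $\mathbb{T}^F$ on $X=\mathcal{W}$. The paper dispatches this in one line: analyticity of $\mathbb{T}^F$ (Proposition~\ref{anal_perturb}) forces the growth bound to equal the spectral bound, $\omega_0(\mathbb{T}^F)=\sup\{\Re\lambda:\lambda\in\sigma(A_F)\}$, and Proposition~\ref{thD1} then gives $\omega_0(\mathbb{T}^F)\le 0$, hence boundedness. Your route through the energy identity does not work as written, because the energy $E(t)$ in \eqref{B8} controls only $\|h(t,\cdot)\|_{L^2}$ and $|\dot H(t)|$, whereas the $\mathcal{X}$-norm in \eqref{defineX} carries $\|h\|_{H^1(\mathcal{E})}$ and $|H|$; the energy is therefore \emph{strictly weaker} than the $\mathcal{X}$-norm, not ``essentially $\mathcal{X}$-level''. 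The same mismatch propagates to the time-differentiated trajectory: the energy of $\dot z$ is again an $L^2$-type functional, not $\|A_F z\|_{\mathcal{X}}$, so you never obtain the two endpoint bounds that the interpolation into $\mathcal{W}$ would require. The fix is precisely the paper's: use analyticity directly, via the identity of spectral and growth bounds, and bypass the energy estimate entirely for this step.
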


\begin{proof}
	We first remark that, since $\mathbb{T}^F$ is analytic, its growth bound $\omega_0\left(\mathbb{T}^F\right)$ is given (\cite[Proposition 2.9,\;p.120-121]{p2}) by
	\begin{equation*}
		\omega_0\left(\mathbb{T}^F\right)=\sup_{ \lambda \in \sigma(A_F)} {\rm Re}\,\lambda.
	\end{equation*}
	This fact, combined with Proposition \ref{thD1} and \ref{the_point_spectrum_AF}, implies that $\mathbb{T}^F$ is a bounded semigroup.
	
   On the other hand, from Proposition \ref{thD1} and \ref{the_point_spectrum_AF}, it follows that $A_F$ has no eigenvalues on the imaginary axis and that $\sigma\left(A_F\right)\cap i\mathbb{R}=\{0\}$.
Using the above properties and applying the Arendt-Batty theorem (see Arendt and Batty \cite{p22}), we obtain the announced conclusion.
\end{proof}

We are now in a position to prove the main result of this section, which can be seen as a reinforced version of Theorem \ref{thA1}, already
announced in Section \ref{sec_very_first}. Recall that the operators $A,\ B$ and $C$ appearing in the theorem below are those introduced of the beginning of this subsection.

\begin{theorem}\label{thA3}
	For every initial data satisfying the assumptions in Theorem \ref{th_exist_1}, there exists a unique optimal control $\bar{u}\in L^2(0,\infty)$ minimizing the cost functional $J(u)$ defined in \eqref{defineJu}. Moreover,  $\bar{u}$ can be expressed in the feedback form $\bar{u}(t)=-B^* P_0 z(t)$,
	where $P_0\in\mathcal{L}(X)$ is the minimal self-adjoint nonnegative solution to the algebraic Riccati equation
	$$A^* Pz+PAz-P B B^*Pz+C^* C z = 0 \qquad \quad (z\in \mathcal{D}(A)).$$
Finally, the corresponding minimal cost functional \eqref{defineJu} is given by
	$$J(\bar{u},\; z_0)=\left<P_0 z_0,\; z_0\right>_X.$$
\end{theorem}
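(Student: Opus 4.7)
The plan is to apply Proposition \ref{mainusedtheorem} with the data $(A, B, C, F)$ introduced at the beginning of this subsection. First, the cost functional \eqref{defineJu} is exactly the abstract LQR cost \eqref{costfunction}: by \eqref{A3} and \eqref{def_C_bun}, one has $Cz(t) = -(q_+(t)-q_-(t))/(2a) = \dot H(t)$ along trajectories, so $|Cz(t)|^2 = |\dot H(t)|^2$. Hence the conclusions of Theorem \ref{thA3} will follow once the two structural hypotheses of Proposition \ref{mainusedtheorem} are verified.

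The first hypothesis, that $A+BF$ generates a strongly stable semigroup on $X$, is precisely Proposition \ref{strongly_stable}. The second hypothesis is that $\Sigma\!\left(A+BF,\, 0,\, \begin{bmatrix} C \\ F \end{bmatrix}\right)$ is output stable, i.e., via Remark \ref{book_remark}, that $C$ and $F$ are infinite-time admissible observation operators for the closed-loop semigroup $\mathbb{T}^F$. The central observation is that the closed-loop input $u(t) = Fz(t)$ coincides with the energy-decreasing feedback $\tilde u_1(t) = -\dot H(t)$ anticipated after Proposition \ref{leA1}. Plugging this into the energy identity \eqref{EEst} gives
\begin{equation}\label{closed_loop_energy_plan}
\frac{d}{dt} E(t) = -\mu \int_{\mathbb{R}} \left(\frac{\partial q}{\partial x}\right)^{\!2}(t,x)\, dx - |\dot H(t)|^2 \qquad (t>0).
\end{equation}
Integrating over $(0,\infty)$ and using $E(t) \geq 0$ together with the bound $E(0) \leq c\|z_0\|_X^2$ yields $\int_0^\infty |\dot H(t)|^2\, dt \leq c \|z_0\|_X^2$. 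Since $|Cz|^2 = |Fz|^2 = |\dot H|^2$, this delivers the required uniform admissibility estimate for the combined observation $[C;F]$ with respect to $\mathbb{T}^F$.

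With both hypotheses verified, Proposition \ref{mainusedtheorem} directly supplies the existence and uniqueness of $\bar u \in L^2(0,\infty)$ minimizing \eqref{defineJu}, the feedback representation $\bar u(t) = -B^* P_0 z(t)$ with $P_0 \in \mathcal{L}(X)$ the minimal self-adjoint nonnegative solution of the stated algebraic Riccati equation, and the optimal cost identity $J(\bar u, z_0) = \langle P_0 z_0, z_0\rangle_X$.

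The main delicate point is justifying that the energy identity \eqref{EEst}, derived in Proposition \ref{leA1} for open-loop PDE solutions, applies to the closed-loop trajectory $z(\cdot) = \mathbb{T}^F(\cdot) z_0$ driven by $u(\cdot) = Fz(\cdot)$. Since $F \in \mathcal{L}(X, U)$ and $\mathbb{T}^F$ is analytic by Proposition \ref{anal_perturb}, the closed-loop solution enjoys the regularity required in Theorem \ref{th_exist_1}, and Proposition \ref{echivalenta_mare} then identifies it with a genuine PDE solution of \eqref{A1}--\eqref{A_Last} to which \eqref{EEst} applies. A standard density argument extends the bound from smooth initial data to arbitrary $z_0 \in X$, completing the verification of output stability.
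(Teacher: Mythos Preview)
Your proposal is correct and follows essentially the same route as the paper's own proof: both verify the hypotheses of Proposition \ref{mainusedtheorem} by invoking Proposition \ref{strongly_stable} for strong stability of $\mathbb{T}^F$ and by substituting the closed-loop feedback $u=Fz=-\dot H$ into the energy identity \eqref{EEst} of Proposition \ref{leA1} to obtain the uniform bound $\int_0^\infty |\dot H(t)|^2\,dt \le c\|z_0\|_X^2$, which gives output stability for $\begin{bmatrix} C\\F\end{bmatrix}$ since $C=-F$. Your final paragraph, justifying why the PDE energy identity applies to the closed-loop semigroup trajectory via Proposition \ref{echivalenta_mare} and analyticity, is a worthwhile elaboration that the paper leaves implicit.
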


\begin{proof}
As stated in Proposition \ref{echivalenta_mare}, system \eqref{A1}-\eqref{A_Last} is equivalent to system \eqref{A3}-\eqref{A4}. Hence, finding an optimal control for the latter system implies that the optimal control is also suitable for the original system.

To apply Proposition \ref{mainusedtheorem} we first remark that, according to Proposition \ref{strongly_stable}, the semigroup generated by $A_F=A+BF$ is strongly stable. Moreover, from Proposition \ref{leA1} it follows that
\begin{equation*}
	\int_{0}^{\infty} \left\vert \dot{H}(t) \right\vert^2 {\rm d}t \leq 2\Vert z_0 \Vert_{X}^2.
\end{equation*}
Combining the last estimate with \eqref{A3} and with the definitions \eqref{def_C_bun}, \eqref{DEF} of the operators $C$ and $F$,
it follows that the system $\Sigma\left(A+BF,\,0,\, \begin{bmatrix} C\\F\end{bmatrix}\right)$ is output stable. We can thus apply Proposition \ref{mainusedtheorem} to obtain the announced conclusions.
\end{proof}

\section{Appendix}\label{sec_app}
\subsection{Appendix A}\label{AppendixA}

This paragraph is devoted to the proof of Lemmas \ref{le1}-\ref{le4}.

\begin{proof}[Proof of Lemma \ref{le1}]
	
		Since
	\begin{equation}
		\frac{\lambda^2}{1+\mu\lambda}=
		\frac{1}{\mu^2}\frac{(\mu\lambda)^2}{\mu\lambda+1},\notag
	\end{equation}
	it clearly suffices  to prove \eqref{F1} and \eqref{F2} for $\mu=1$.
	
	We take $\lambda\neq -1$ and write $\lambda=\rho \exp({i\phi})$ with $\rho >0$. Since
	\begin{align}
		\frac{\lambda^2}{1+\lambda}=\frac{\rho^2\left(\cos(2\phi)+\rho \cos(\phi)+i(\sin(2\phi)+\rho \sin(\phi)\right)}{|1+\lambda|^2},\notag
	\end{align}
	it follows that $\frac{\lambda^2}{1+\lambda}\in\mathbb{R}_-$ if and only if
	\begin{equation}
		\cos(2\phi)+\rho \cos(\phi)<0 \quad {\rm and} \quad \sin(2\phi)+\rho \sin(\phi)=0.\notag
	\end{equation}
	The above condition implies that
	\begin{equation}\label{notag}
		\phi=\pi \quad {\rm and} \quad \rho >1 \quad {\rm or} \quad 2\cos(\phi)+\rho=0.
	\end{equation}
	The first condition in \eqref{notag} writes $\lambda\in (-\infty, -1)$. If the second condition in \eqref{notag} holds then
	\begin{align}
		&\vert 1+\lambda\vert^2\notag\\
		=&\left(1+\rho\cos(\phi)+i\rho\sin(\phi)\right)\left(1+\rho\cos(\phi)-i\rho\sin(\phi)\right)\notag\\
		=&\rho^2+2\rho\cos(\phi)+1\notag\\
		=&\rho(2\cos(\phi)+\rho)+1\notag\\
		=&1.\notag
	\end{align}
	So that the second condition is equivalent to $\vert \lambda +1 \vert=1$.
	
	\noindent We have thus proved the conclusion for $\mu=1$ and, consequently, for any positive $\mu$.
\end{proof}

\begin{proof}[Proof of Lemma \ref{le2}]
	 We first note that
	\begin{equation}
		\det \mathcal{M}_\lambda=4a^2\lambda\left(a+\frac{1}{\omega_\lambda}\right)\times \left[\lambda\left(2+\frac{4a^3}{3}\right)+4a\left(\mu +\frac{1}{\lambda }\right)+\frac{4a^2\lambda}{\omega_\lambda}\right]. \notag
	\end{equation}
	In order to obtain the conclusion of Lemma \ref{le2}, it suffices to check that the equation
	\begin{equation}
		4a^2\lambda\left[a+\frac{1}{\omega_\lambda}\right]\times \left[\lambda\left(2+\frac{4a^3}{3}\right)+4a\left(\mu +\frac{1}{\lambda }\right)+\frac{4a^2\lambda}{\omega_\lambda}\right]=0, \label{eqMM}
	\end{equation}
	admits at most four solutions satisfying  \eqref{C1} and lying in $\mathbb{C}_-$.
	To this aim, we begin by remarking that for any $\lambda$ satisfying \eqref{C1} we have
	\begin{equation}
		{\rm Re}\, \left(a+\frac{1}{\omega_\lambda}\right)\geq a \notag\,.
	\end{equation}
	Consequently, the complex number $\lambda$ satisfying \eqref{C1} is a solution of $\det\mathcal{M}_\lambda=0$, if and only if
	\begin{equation}
		\lambda\left(2+\frac{4a^3}{3}\right)+4a\left(\mu +\frac{1}{\lambda }\right)+\frac{4a^2\lambda}{\omega_\lambda}=0.\label{C3}
	\end{equation}

	Let now $\lambda \in \mathbb{C}^*$ with ${\rm Re}\,\lambda\geqslant 0$. Then $\lambda=\rho\exp({i\phi})$  with $\vert \phi\vert \leq \frac{\pi}{2}$ and $\rho > 0$. Then there exists an angle $\psi$ with $\vert \psi\vert\leq \frac{\pi}{2}$ such that
	\begin{equation}
		1+\mu\lambda=1+\mu\rho \cos(\phi)+ i\mu\rho \sin(\phi)=\vert 1+\mu\lambda\vert \exp({i\psi}). \notag
	\end{equation}
	Moreover, it is easy to see that $\psi\geq 0$ if and only if $\phi \geq 0$, thus $
		\left\vert\phi-\frac{\psi}{2}\right\vert\leq \frac{\pi}{2}$.
	Combining this estimate with the fact, following from the definition \eqref{DEOMGA} of $\omega_\lambda$,
that
$$
\omega_\lambda=\frac{\rho}{\vert 1+\mu\lambda\vert^{\frac{1}{2}} }\exp\left({i\left(\phi-\frac{\psi}{2}\right)}\right),
$$
it follows that
	\begin{equation}
		 {\rm Re}\,\left(\frac{\omega_\lambda}{\lambda}\right)\geq 0 \qquad\qquad(\lambda \in \mathbb{C}^*,\ \ {\rm Re}\,\lambda\geqslant 0).\notag
	\end{equation}
	The last inequality implies  that for every $\lambda\in \mathbb{C}^*$ with ${\rm Re}\, \lambda \geqslant 0$ we have
	\begin{equation}
		{\rm Re}\left(\lambda\left(2+\frac{4a^3}{3}\right)+4a\left(\mu +\frac{1}{\lambda }\right)+\frac{4a^2\lambda}{\omega_\lambda}\right)\geq 4a\mu >0.\notag
	\end{equation}
	We have thus shown that any the solution of \eqref{C3} either vanishes or it lies in the left half plane.\\
	Moreover, if $\lambda$ satisfying \eqref{C1} is a solution of $\det\mathcal{M}_\lambda=0$, then we have
	\begin{equation}
		\left[\lambda^2\left(2+\frac{4a^3}{3}\right)+4a\left(\mu\lambda +1\right)\right]^2=16a^4\lambda^2 (1+\mu\lambda),\notag
	\end{equation}
	so that $\det\mathcal{M}_\lambda$ can vanish for at most four values of $\lambda$ satisfying \eqref{C1}. 	
\end{proof}

\begin{proof}[Proof of Lemma \ref{le3}]
	Firstly, by the definition of $D_+(\omega)$ \eqref{defintion_of_operator_D_plus}, for every $\gamma_+\in \mathbb{C}$, we have
	\begin{equation}
		\int_{a}^{\infty}\vert (D_+(\omega)\gamma_+)(x)\vert^2{\rm d}x\leq \exp(2({\rm Re}\,\omega)a)\vert\gamma_+\vert^2 \int_{a}^{\infty} \exp\left(-2({\rm Re}\, \omega)x\right){\rm d}x= \frac{\vert\gamma_+\vert^2}{2{\rm Re}\,\, \omega},\notag
	\end{equation}
	which implies the second equality in \eqref{NBD}.
	
	On the other hand, to estimate the norm of operator $R_+(\omega)$, defined in \eqref{defintion_of_operator_R_plus}, we remark that
	\begin{align}\label{estimate_R_plus_L1norm1}
		&\left\vert
		\frac{1}{2\omega}\left(\int_{a}^{x}\exp(\omega\xi)\varphi(\xi){\rm d}\xi\right)\exp(-\omega x)
		\right \vert\notag \\
		\leq& \frac{1}{2\vert\omega\vert}\left(\int_{a}^{x}\exp\left(\left({\rm Re}\,\omega\right)\xi\right)\vert \varphi(\xi)\vert {\rm d}\xi\right)\exp\left(-({\rm Re}\,\omega)x\right)\notag\\
		\leq &
		\frac{\Vert \varphi \Vert_{L^\infty (a,\infty)}}{2\vert\omega\vert}\left(\int_{a}^{x}\exp(({\rm Re}\,\omega)\xi){\rm d}\xi\right)\exp(-({\rm Re}\,\omega)x)\notag\\
		=&
		\frac{\Vert \varphi \Vert_{L^\infty (a,\infty)}}{2\vert\omega\vert}\frac{\exp(({\rm Re}\,\omega)x)-\exp(({\rm Re}\,\omega)a)}{{\rm Re}\,\omega}\exp(-({\rm Re}\, \omega)x)\notag\\
		\leq&
		\frac{\Vert \varphi \Vert_{L^\infty (a,\infty)}}{2\vert\omega\vert {\rm Re}\, \omega} \qquad\qquad\quad (\varphi\in L^\infty(a,\infty),\; x\in (a,\infty)).
	\end{align}
	With a similar calculation for the other two terms of $R_+(\omega)$, we obtain
	\begin{equation}\label{estimate_R_plus_L1norm2}
		\left \vert \frac{1}{2\omega}\left(\int_{x}^{\infty}\exp(-\omega \xi)\varphi(\xi){\rm d}\xi\right)\exp(\omega x)\right \vert
		\leq
		\frac{\Vert \varphi \Vert_{L^\infty (a,\infty)}}{2\vert\omega\vert {\rm Re}\, \omega},
	\end{equation}
	and
	\begin{equation}\label{estimate_R_plus_L1norm3}
		\left\vert\frac{1}{2\omega}\left(\int_{a}^{\infty}\exp(-\omega \xi)\varphi(\xi){\rm d}\xi\right)\exp(2\omega a)\exp(-\omega x)\right\vert
		\leq
		\frac{\Vert \varphi \Vert_{L^\infty (a,\infty)}}{2\vert\omega\vert {\rm Re}\, \omega},
	\end{equation}
	where
	\begin{equation*}
		\varphi\in L^\infty(a,\infty) \qquad {\rm and}\qquad  x\in (a,\infty).
	\end{equation*}
	Combining the defintion \eqref{defintion_of_operator_R_plus} of $R_+(\omega)$ with the \eqref{estimate_R_plus_L1norm1}-\eqref{estimate_R_plus_L1norm3}, it follows that
	\begin{equation}
		\left\Vert R_+(\omega)\right\Vert_{\mathcal{L}(L^\infty (a,\infty))}\leq\frac{3}{2\vert\omega\vert {\rm Re}\, \omega} \qquad\qquad ({\rm Re}\, \omega>0,\;\varphi\in L^\infty(a,\infty)).\label{ERB1}
	\end{equation}
	We next estimate the norm in $\mathcal{L}(L^1(a,\infty))$ of the operator $R_+(\omega)$ defined in \eqref{defintion_of_operator_R_plus}. For the first term in the right hand side of \eqref{defintion_of_operator_R_plus} we can use Tonelli's theorem to get
	\begin{align}\label{estimate_of_R_plus1}
		&\int_{a}^{\infty}\left\vert\frac{1}{2 \omega}\left(\int_{a}^{x}\exp(\omega \xi) \varphi(\xi){\rm d}\xi\right)\exp(-\omega x)\right\vert {\rm d}x\notag\\
		\leq&
		\frac{1}{2\vert \omega\vert}\int_{a}^{\infty}\left(\int_{a}^{x}\exp(({\rm Re}\,\omega)\xi)\vert\varphi(\xi)\vert {\rm d}\xi\right)\exp(-({\rm Re}\,\omega)x){\rm d}x\notag\\
		=&\frac{1}{2\vert \omega\vert }\int_{a}^{\infty}\left(\int_{\xi}^{\infty}\exp(-({\rm Re}\,\omega)x){\rm d}x\right) \exp(({\rm Re}\,\omega)\xi)\vert\varphi(\xi)\vert {\rm d}\xi\notag\\
		\leq&
		\frac{\Vert \varphi \Vert_{L^1 (a,\infty)}}{2\vert\omega\vert {\rm Re}\, \omega}\qquad\qquad\qquad (\varphi\in L^1(a,\infty)).
	\end{align}
	In a fully similar manner it can be checked that, for every $\varphi\in L^1(a,\infty)$, the other terms in the right hand side of \eqref{defintion_of_operator_R_plus} satisfy the estimates,
	\begin{equation}\label{estimate_of_R_plus2}
		\int_{a}^{\infty}\left\vert	\frac{1}{2 \omega} \left(\int_{x}^{\infty}\exp(-\omega \xi) \varphi(\xi){\rm d}\xi\right)\exp(\omega x)\right\vert {\rm d}x
		\leq
		\frac{\Vert \varphi \Vert_{L^1 (a,\infty)}}{2\vert\omega\vert {\rm Re}\, \omega},
	\end{equation}
	and
	\begin{equation}\label{estimate_of_R_plus3}
		\int_{a}^{\infty}\left\vert\frac{1}{2 \omega}\left( \int_{a}^{\infty}\exp(-\omega \xi) \varphi(\xi){\rm d}\xi\right)\exp(2\omega a)\exp(-\omega x)\right\vert {\rm d}x\leq \frac{\Vert \varphi \Vert_{L^1 (a,\infty)}}{2\vert\omega\vert {\rm Re}\, \omega}.
	\end{equation}
	Combining the definition \eqref{defintion_of_operator_R_plus} of $R_+(\omega)$ with formulas \eqref{estimate_of_R_plus1}-\eqref{estimate_of_R_plus3}, we obtain that
	\begin{equation}
		\Vert R_+(\omega)\Vert_{\mathcal{L}(L^1(a,\infty))}\leq \frac{3}{2\vert\omega \vert {\rm Re}\,\omega} \qquad\qquad\qquad ({\rm Re}\,\omega>0).\label{ERB2}
	\end{equation}
	Using the \eqref{ERB1} and \eqref{ERB2}, combined with the Riesz-Thorin theorem (see, for instance, \cite[p.318]{p24}), we find the second inequality in \eqref{NRB} also holds.
	
	The  first inequalities in \eqref{NBD} and \eqref{NRB} can be proved in a fully similar manner, so we skip the corresponding details.
\end{proof}

\begin{proof}[Proof of Lemma \ref{le4}]
	
	After a standard caculation, we find that the general solution of equation \eqref{Eqqlad} is
	\begin{align}
		q_\lambda (x)=&\frac{1}{2\omega_\lambda}\left(\int_{a}^{x}\exp(\omega_\lambda\xi)\varphi(\xi){\rm d}\xi\right)\exp(-\omega_\lambda x)-\frac{1}{2\omega_\lambda}\left(\int_{a}^{x}\exp(-\omega_\lambda\xi)\varphi(\xi){\rm d}\xi\right)\exp(\omega_\lambda x)\notag\\
		&+K_1\exp(-\omega_\lambda x)+K_2\exp(\omega_\lambda x)\qquad\qquad\qquad\qquad (x\geq a),\notag
	\end{align}
	where $K_1$ and $K_2$ are complex constants. \\
	\indent Combined Lemma \ref{le1} and \ref{le2}, we get ${\rm Re}\, \omega_\lambda>0$, where $\omega_\lambda$ is defined by \eqref{DEOMGA}. In order to have $q_\lambda \in L^2(\mathcal{E})$  we necessarily choose
	\begin{equation*}
		K_2=\frac{1}{2\omega_\lambda}\left(\int_{a}^{\infty}\exp(-\omega_\lambda\xi)\varphi(\xi){\rm d}\xi\right),
	\end{equation*}
	so that
	\begin{align*}
		q_\lambda (x)=&\frac{1}{2\omega_\lambda}\left(\int_{a}^{x}\exp(\omega_\lambda\xi)\varphi(\xi){\rm d}\xi\right)\exp(-\omega_\lambda x)
		+\frac{1}{2\omega_\lambda}\left(\int_{x}^{\infty}\exp(-\omega_\lambda\xi)\varphi(\xi){\rm d}\xi\right)\exp(\omega_\lambda x)\notag\\
		&+K_1\exp(-\omega_\lambda x) \qquad\qquad\qquad\qquad (x\geq a).
	\end{align*}
	Using next the boundary condition in \eqref{Eqqlad} we obtain that
	\begin{equation}
		\gamma_+=\frac{1}{2\omega_\lambda}\left(\int_{a}^{\infty}\exp(-\omega_\lambda\xi)\varphi(\xi){\rm d}\xi\right)\exp (\omega_\lambda a)+K_1\exp(-\omega_\lambda a), \notag
	\end{equation}
	which yields
	\begin{equation}
		K_1=\gamma_+ \exp(\omega_\lambda a)-\frac{1}{2\omega_\lambda}
		\left(\int_{a}^{\infty}\exp(-\omega_\lambda\xi)\varphi(\xi){\rm d}\xi\right)\exp(2\omega_\lambda a).
	\end{equation}
	With the above obtained values of $K_1$ and $K_2$ we see that if $q_\lambda \in H^2(\mathcal{E})$  satisfies \eqref{Eqqlad}
	then $q_\lambda$ satisfies the second equation in \eqref{whole_whale}. By a fully similar calculation we can check that
	if $q_\lambda \in H^2(\mathcal{E})$  satisfies \eqref{Eqqlad}
	then $q_\lambda$ satisfies the first equation in \eqref{whole_whale}. Thus the unique possible solution $q_\lambda \in H^2(\mathcal{E})$  of
	\eqref{Eqqlad} is given by \eqref{whole_whale}.
	
	On the other hand, we know from  Lemma \ref{le3} (respectively from the above calculations) that $q_\lambda$ defined by \eqref{whole_whale} lies in $L^2(\mathcal{E})$ (respectively that it satisfies \eqref{Eqqlad}). This implies that indeed  $q_\lambda$ defined by \eqref{whole_whale} lies in $H^2(\mathcal{E})$ and it is the unique solution in this space of \eqref{Eqqlad}.
\end{proof}

\subsection{Appendix B}\label{Appendix B}

This subsection is devoted to the proofs of the lemmas in Section \ref{sec_well_new}.

\begin{proof}[Proof of Lemma \ref{le5}]
	By obvious homogeneity reasons (see the proof of Lemma \ref{le1}),
	it suffices to prove the conclusion of the present lemma for $\mu=1$. In this case we have
	\begin{equation*}
		\vert 1+\lambda \vert^2=1+\rho^2+2\rho\cos(\phi),
	\end{equation*}
	so that
	\begin{align}
		\frac{\lambda^2}{1+\lambda}&=\frac{\left(\rho^2\cos(2\phi)+i\rho\sin(2\phi)\right)\left(1+\rho\cos(\phi)-i\rho\sin(\phi)\right)}{1+\rho^2+2\rho\cos(\phi}\notag\\
		&=\frac{\rho^2\cos(2\phi)+\rho^3\cos(\phi)+i(\rho^2\sin(2\phi)+\rho^3\sin(\phi))}{1+\rho^2+2\rho\cos(\phi)} \qquad\qquad\quad (\lambda\in\Sigma_\theta). \label{F3}
	\end{align}
	Moreover, we have that
	\begin{equation*}
		\rho^2\sin(2\phi)+\rho^3\sin(\phi)=\rho^2\sin(\phi)(2\cos(\phi)+\rho)\neq 0,
	\end{equation*}
	where
	\begin{equation*}
		\phi\in
		\left(
		-\frac{\pi}{2}-\theta,\; \frac{\pi}{2}+\theta\right)\verb|\|\{0\} \qquad  {\rm and} \qquad \rho\geq\frac{4}{1-\sin(\theta)}
		.
	\end{equation*}
	The above three formulas imply that the imaginary part of $\frac{\lambda^2}{1+\lambda}$, with $\lambda=\rho \exp(i\phi)$ and $\rho\geq \frac{4}{1-\sin (\theta)}$ vanishes iff $\phi=0$. On the other hand, formula \eqref{F3} clearly yields that ${\rm Re}\,\frac{\lambda^2}{1+\lambda}>0$. We have thus proved that \eqref{F1} holds.

	We still prove that formula \eqref{F2} holds for $\mu=1$. To  this aim, we first recall the classical formula
	\begin{equation}\label{estimateofReomega}
		{\rm Re}\,\sqrt{z}=\frac{1}{\sqrt{2}}\sqrt{\vert z\vert + {\rm Re}\,z} \qquad\qquad\quad (z\in\mathbb{C}\verb|\|\mathbb{R}_-).
	\end{equation}
	We next apply the above formula for $z=\frac{\lambda^2}{1+\lambda}$ and $\lambda=\rho \exp(i\phi)$, with
	\begin{equation}
		\rho\geq \frac{4}{1-\sin (\theta)} \qquad	{\rm and}\qquad \phi\in
		\left(
		-\frac{\pi}{2}-\theta,\, \frac{\pi}{2}+\theta\right).\label{the_condition_of_lambda}
	\end{equation}
	Combining \eqref{estimateofReomega}, \eqref{the_condition_of_lambda} and \eqref{F3}, we obtain that
	\begin{align}
		\label{F5} 2\left[{\rm Re}\,\left(\sqrt{\frac{\lambda^2}{1+\lambda}}\right)\right]^2
		&=\frac{\rho^2\sqrt{1+\rho^2+2\rho\cos(\phi)}}{1+\rho^2+2\rho\cos(\phi)}+\frac{\rho^2\cos(2\phi)+\rho^3\cos(\phi)}{1+\rho^2+2\rho\cos(\phi)}\notag\\
		&=\frac{\rho^2}{1+\rho^2+2\rho\cos(\phi)}\left(\sqrt{1+\rho^2+2\rho\cos(\phi)}+\cos(2\phi)+\rho\cos(\phi)\right),
	\end{align}
	for every $\lambda=\rho \exp(i\phi)$ with $\rho$ and $\phi$ satisfying \eqref{the_condition_of_lambda}. Moreover, since $\rho>1$, we have
	\begin{align*}
		&\sqrt{1+\rho^2+2\rho\cos(\phi)}+\cos(2\phi)+\rho\cos(\phi)\notag\\
		\geq& \rho-1+\cos(2\phi)+\rho\cos(\phi)\notag\\
		\geq 	&\rho(1-\sin(\theta))-2\notag\\
		\geq	&\frac{1}{2}\rho(1-\sin(\theta)).
	\end{align*}
	Using the above equation and \eqref{F5}, for every $\lambda=\rho \exp(i\phi)$ satisfying \eqref{the_condition_of_lambda}, it follows that
	\begin{equation*}
		2\left[{\rm Re}\,\left(\sqrt{\frac{\lambda^2}{1+\lambda}}\right)\right]^2\geqslant
		\frac{\rho^3(1-\sin(\theta))}{2(1+\rho^2+2\rho\cos(\phi))}\geq\frac{\rho^3(1-\sin(\theta))}{2(1+\rho)^2}\geq\frac{\rho(1-\sin(\theta))}{8},
	\end{equation*}
	which ends the proof of Lemma \ref{le5}.
\end{proof}
\begin{proof}[Proof of Lemma \ref{le6}]
	From Lemma \ref{le2}, we find for every $\lambda$ satisfying \eqref{F6}, the matrix $\mathcal{M}_\lambda$ defined in \eqref{DMLAMBDA} is invertible. Moreover, from the definition \eqref{DMLAMBDA} of $\mathcal{M}_\lambda$ , we have
	\begin{equation*}
		\lambda^{-1}\mathcal{M}_\lambda=\left[\begin{gathered}
			1+\frac{8a^3}{3}\qquad\quad -1+\frac{4a^3}{3}\\
			-1+\frac{4a^3}{3}\qquad\quad 1+\frac{8a^3}{3}
		\end{gathered}\right]+2a\left(\frac{\mu}{\lambda}+\frac{1}{\lambda^2}\right)\left[\begin{gathered}
			1\quad -1\\
			-1\quad 1
		\end{gathered}\right]+\frac{4a^2}{\omega_\lambda}\left[\begin{gathered}
			1\quad 0\\
			0\quad 1
		\end{gathered}\right].
	\end{equation*}
	The above formula can be rewritte,
	\begin{equation*}
		\lambda^{-1}\mathcal{M}_\lambda=\left[\begin{gathered}
			1+\frac{8a^3}{3}\qquad -1+\frac{4a^3}{3}\\
			-1+\frac{4a^3}{3}\qquad 1+\frac{8a^3}{3}
		\end{gathered}\right]+\mathcal{P}_\lambda,
	\end{equation*}
where
$$
\mathcal{P}_\lambda=2a\left(\frac{\mu}{\lambda}+\frac{1}{\lambda^2}\right)\left[\begin{gathered}
			1\quad -1\\
			-1\quad 1
		\end{gathered}\right]+\frac{4a^2}{\omega_\lambda}\left[\begin{gathered}
			1\quad 0\\
			0\quad 1
		\end{gathered}\right].
$$
Using next Lemma \ref{le5} we see that there exists a constant $\tilde{K}(\theta,\,\mu)>0$ such that
	\begin{equation*}
		  \Vert \mathcal{P}_\lambda \Vert \leq \frac{\tilde{K}(\theta,\,\mu)}{\sqrt{\vert \lambda \vert}}\qquad\qquad\qquad
			\left(\lambda \in \Sigma_\theta,\; \vert\lambda\vert\geq R(a,\,\mu,\,\theta)\right).
	\end{equation*}
	Moreover, after a simple calculation, we see that the inverse of the matrix $M$ defined in \eqref{AM} is
	\begin{align*}
		M^{-1}=\left[\begin{gathered}
			1+\frac{8a^3}{3} \qquad\quad -\left(1-\frac{4a^3}{3}\right)\\
			-\left(1-\frac{4a^3}{3}\right)  \qquad\quad 1+\frac{8a^3}{3}
		\end{gathered}
		\right],
	\end{align*}
	Consequently,
	\begin{align}
		&\Vert \lambda\mathcal{M}^{-1}_\lambda\Vert \notag\\
		=&\left\Vert (M^{-1}+\mathcal{P}_\lambda)^{-1}\right\Vert
		\notag\\
		\leq& \left\Vert M^{-1}\right\Vert\left\Vert \left(\mathbb{I}+\frac{\mathcal{P}_\lambda}{M^{-1}}\right)\right\Vert\notag\\
		\leq& \left\Vert M^{-1}\right\Vert \left\Vert\frac{1}{1-\left\Vert \frac{\mathcal{P}_\lambda}{\left\Vert M^{-1}\right\Vert}\right\Vert}\right\Vert \qquad\qquad \left(	\lambda \in \Sigma_\theta,\; \vert\lambda\vert\geq R(a,\,\mu,\,\theta)\right).
	\end{align}
	The above formula implies the inequality \eqref{F7}.
\end{proof}
{\bf Acknowledgments.}  
Funded by the European Union (Horizon Europe MSCA project Modconflex, grant number 101073558).

The authors are grateful to Debayan Maity, Jorge San Mart\'in and Tak\'eo Takahashi for numerous suggestions, in particular those involving the detailed estimates from Section \ref{sec_app}.

%%%%%%%%%%%%%%%%%
\bibliographystyle{siam}
\bibliography{references}

\end{document}